\title{Condition Numbers of Gaussian Random Matrices
            \thanks{ This research was supported in part by the Applied 
             Mathematical Sciences 
             Research Program of the Office of Mathematical, Information, and
             Computational Sciences, U.S.  Department of Energy under contract
             DE-AC05-00OR22725 with UT-Battelle, LLC}}
\author{Zizhong Chen
          \thanks{Corresponding author's contact information:
                    Department of Computer Science, 
                    The University of Tennessee,
                    203 Claxton Complex,   
                    1122 Volunteer Boulevard, 
                    Knoxville, TN, 37996-3450. E-mail: {\tt  zchen@cs.utk.edu}.
          }
        \and Jack J. Dongarra
        }
\begin{document}

\maketitle

\begin{abstract}
Let $G_{m \times n}$ be an $m \times n$ real random matrix 
whose elements are independent and identically distributed 
standard normal random variables, and let $\kappa_2(G_{m \times n})$ 
be the 2-norm condition number of $G_{m \times n}$.
We prove that, for any $m \geq 2$, $n \geq 2$ and $x \geq |n-m|+1$,
$\kappa_2(G_{m \times n})$ satisfies 
$
 \frac{1}{\sqrt{2\pi}} \left( { c }/{x} \right)^{|n-m|+1} 
< P\left(\frac{\kappa_2(G_{m \times n})} {{n}/{(|n-m|+1)}}> x \right) < 
 \frac{1}{\sqrt{2\pi}} \left( { C }/{x} \right)^{|n-m|+1},
$
where $0.245 \leq c \leq 2.000$ and $ 5.013 \leq C \leq 6.414$ are universal positive
constants independent of $m$, $n$ and $x$. Moreover, 
for any $m \geq 2$ and $n \geq 2$,
$
E(\log\kappa_2(G_{m \times n})) < \log \frac{n}{|n-m|+1} + 2.258.
$
A similar pair of results for complex Gaussian random matrices is also established.
\end{abstract}

\begin{keywords} 
Condition Number, Eigenvalues, Random Matrices,  Singular Values, Wishart Distribution.
\end{keywords}

\begin{AMS}
15A52, 15A12
\end{AMS}

\pagestyle{myheadings}
\thispagestyle{plain}
\markboth{ZIZHONG CHEN and JACK J. DONGARRA}{CONDITION NUMBERS OF GAUSSIAN RANDOM MATRICES}

\section{Introduction}


In \cite{Edelman89}, Edelman obtained the limiting distributions 
and the limiting expected logarithms of the condition numbers of  
random rectangular matrices whose elements are independent and 
identically distributed standard normal random variables.
The exact distributions of the condition numbers of $2 \times n$
matrices are also given  in \cite{Edelman89} by Edelman.


However, in the study of real-number and complex-number
error correction codes based on random matrices \cite{Chen104} and
their applications in fault tolerant high performance computing~\cite{Chen204}, 
in order to estimate the numerical stability and reliability of our
coding schemes, we need to estimate the probabilities that the 
condition numbers of small random rectangular matrices 
are large. For example, what is the probability that 
the condition number of a $10 \times 5$ random matrix
is larger than $10^2$? 


In this paper, we investigate the tails of
the condition number distributions of random rectangular matrices
whose elements are independent and identically distributed standard normal 
real or complex random variables. We establish upper and lower bounds for
the tails of the condition number distributions of these matrices.
Upper bounds for the expected
logarithms of the condition numbers of these matrices are also given.

Based on our results, for random rectangular matrices whose elements 
are independent and identically distributed standard 
normal real or complex random variables, we are able to estimate 
the probabilities that their condition numbers are large. For example, based on
our results, we are able to tell, for a $10 \times 5$ real random matrix
whose elements are independent and identically distributed standard 
normal random variables, the probability that the 
condition number is larger than $10^2$ is less than $6 \times 10^{-7}$.

Our main results for the 2-norm condition number $\kappa$ of an $m \times n$ real 
random matrix whose elements are independent and identically distributed standard 
normal random variables are:
$$
 \frac{1}{\sqrt{2\pi}} \left( \frac{c}{x} \right)^{|n-m|+1} 
< P\left(\frac{ \kappa} {{n}/{(|n-m|+1)}}  > x\right) < 
 \frac{1}{\sqrt{2\pi}} \left( \frac{C}{x} \right)^{|n-m|+1},
$$
and
$$
E(\log\kappa) < \log\frac{n}{|n-m|+1} + 2.258,
$$
where $0.245 \leq c\leq 2.000$ and $5.013 \leq C \leq 6.414$ are universal positive
constants independent of $m$, $n$ and $x$, and $m\geq 2$, $n\geq 2$ and
$x \geq |n-m|+1$.

For an $m \times n$ complex 
random matrix whose elements are independent and identically distributed standard 
normal random variables, our main results for the 2-norm condition number $\kappa$ are:
$$
 \frac{1}{{2\pi}} \left( \frac{c}{x} \right)^{2(|n-m|+1)} 
< P\left(\frac{ \kappa} {{n}/{(|n-m|+1)}}  > x\right) < 
 \frac{1}{{2\pi}} \left( \frac{C}{x} \right)^{2(|n-m|+1)},
$$
and
$$
E(\log\kappa) < \log\frac{n}{|n-m|+1} + 2.240,
$$
where $0.319 \leq c\leq 2.000$ and $5.013 \leq C \leq 6.298$ are universal positive
constants independent of $m$, $n$ and $x$, and $m\geq 2$, $n\geq 2$ and
$x \geq |n-m|+1$.

After finishing the manuscript of this paper, we communicated with Edelman and
learned that similar problem was also being studied independently by
Edelman and Sutton~\cite{Edelman05}. After simple formatting, 
the upper bounds in both papers actually can be unified into the same format
$$
P\left( \kappa  > x \right) \leq C(m,n,\beta) \left( \frac{1}{x} \right)^{\beta (|n-m|+1)},
$$
where $\beta=1$ for real random matrices and  $\beta=2$ for complex random matrices, and 
$C(m,n,\beta)$ is a function of $m,n$, and $\beta$. 
However, the function $C(m,n,\beta)$ in the two papers do take very different forms
and imply very different meanings.

On one hand, the bounds in~\cite{Edelman05} are asymptotically tight as $x \rightarrow \infty$ 
while the bounds in this paper are not. On the other hand, the bounds in this paper 
involve only elementary functions. Hence they
are much simpler than the asymptotically tight bounds in~\cite{Edelman05} which 
involve high order moments of the largest eigenvalues of Wishart matrices.
Although for the special case of large square random matrices, simple estimations
for $C(m,n,\beta)$ are given in~\cite{Edelman05},
for general rectangular matrices, no simple estimation is available.

It is well-known that the joint eigenvalue density function of a Wishart matrix 
has a closed form expression~\cite{James64}. Therefore, $P\left( \kappa  > x \right)$
can actually be expressed {\it accurately} as a high dimensional
integration of this joint eigenvalue density function. One of the key aspects
to estimate $P\left( \kappa  > x \right)$ is to find a simple-to-use estimation of
this accurate (but not simple-to-use) high dimensional integral expression. 
This paper is meaningful in that it finds out such a simple-to-use estimation 
by giving out simple upper and lower bounds
which  involve only elementary functions. We refer interested readers to~\cite{Edelman05}
for more accurate asymptotically tight bounds and other related bounds for 
the tails of the condition numbers of general $\beta$-Laguerre ensembles.

Above and in what follows in this paper, the constant $C$ and $c$
denote universal positive constants independent of $m$, $n$ and $x$; 
however, identical symbols may represent different numbers in different place.

\section{Preliminaries and basic facts}

Let $X$ be an $m \times n$ matrix.
If $\sigma_1 \geq \sigma_2\geq ...\geq \sigma_p $,
where $p= \mbox{min} \{m,n\}$, 
are the $p$ singular values of $X$,
then the {\it 2-norm condition number } of $X$ is 
$$\kappa_2(X) = \frac{\sigma_1}{\sigma_p}.$$

For any $m \times n$ matrix $X$, $X^T$ is
an $n \times m$ matrix and $\kappa_{2}(X) = \kappa_{2}(X^T)$. 
So, without loss of generality, in discussing the condition 
numbers of random matrices, it is enough to only consider 
random matrices with no  more rows than columns. Therefore, 
from now on, when we speak of an $m \times n$ matrix, 
we will assume $m \leq n$ in the rest of this paper. 

Let $G_{m \times n}$ be an $m \times n$ real random 
matrix whose elements are independent and 
identically distributed standard normal random variables. 
Let $W_{m,n}$ denote the $m \times m$ random matrix $G_{m \times n} G_{m \times n}^{T}$.
$W_{m,n}$ is the well known Wishart matrix named after John Wishart who has first
studied its distribution.

Similar to \cite{Edelman89}, in this paper, we will study the condition number of $G_{m \times n}$ 
through investigating the eigenvalues of the Wishart matrix $W_{m,n}$. 
The following lemma establishes a simple relationship between
the condition number of $G_{m \times n}$ and the eigenvalues of $W_{m,n}$.
\begin{proposition} 
If  $\lambda_{max}$ is the largest  eigenvalue of $W_{m,n}$,
and $\lambda_{min}$ is the smallest eigenvalue of $W_{m,n}$, then
the 2-norm condition number of $G_{m \times n}$ satisfies
$$
\kappa_2(G_{m \times n})=\sqrt{\frac{\lambda_{max}}{\lambda_{min}}}.
$$
\end{proposition}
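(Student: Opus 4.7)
The plan is to reduce the statement to a direct computation via the singular value decomposition of $G_{m \times n}$, using the basic identity that the nonzero eigenvalues of $GG^T$ are exactly the squares of the nonzero singular values of $G$.

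First I would write $G_{m \times n} = U \Sigma V^T$, where $U$ is an $m \times m$ orthogonal matrix, $V$ is an $n \times n$ orthogonal matrix, and $\Sigma$ is the $m \times n$ matrix with the (ordered) singular values $\sigma_1 \geq \sigma_2 \geq \cdots \geq \sigma_m \geq 0$ on its main diagonal and zeros elsewhere. Under the standing assumption $m \leq n$ this is the standard reduced-form SVD, and we have $p = \min\{m,n\} = m$, so the $m$ diagonal entries of $\Sigma$ are exactly the $p$ singular values featured in the definition of $\kappa_2$.

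Next I would substitute this factorization into $W_{m,n} = G_{m \times n} G_{m \times n}^T$ and use $V^T V = I_n$ to obtain $W_{m,n} = U (\Sigma \Sigma^T) U^T$. A quick check shows $\Sigma \Sigma^T$ is the $m \times m$ diagonal matrix $\mathrm{diag}(\sigma_1^2, \ldots, \sigma_m^2)$. Since $U$ is orthogonal, this displays $W_{m,n}$ in diagonalized form, so its eigenvalues are precisely $\sigma_1^2, \ldots, \sigma_m^2$. Ordering gives $\lambda_{\max} = \sigma_1^2$ and $\lambda_{\min} = \sigma_m^2$.

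Finally, invoking the definition $\kappa_2(G_{m \times n}) = \sigma_1/\sigma_m$ stated just before the proposition yields $\kappa_2(G_{m \times n}) = \sqrt{\sigma_1^2 / \sigma_m^2} = \sqrt{\lambda_{\max}/\lambda_{\min}}$, which is the claim. There is no genuine obstacle here: the only thing to be mildly careful about is the shape convention for $\Sigma$ when $m < n$ (so that $\Sigma \Sigma^T$ rather than $\Sigma^T \Sigma$ is the $m \times m$ object whose eigenvalues match those of $W_{m,n}$), and the implicit observation that $\lambda_{\min}(W_{m,n}) > 0$ almost surely, so the square root and the ratio are well defined.
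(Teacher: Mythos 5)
Your proof is correct and is exactly the standard SVD argument that the paper implicitly relies on (it states this proposition without proof as a well-known fact). Your extra care about the shape of $\Sigma$ for $m<n$ and the almost-sure positivity of $\lambda_{\min}$ is appropriate and complete.
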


Remarkably enough, the exact joint probability density function for 
the $m$ eigenvalues of  the Wishart matrix $W_{m,n}$ 
can be written down in a closed form \cite{James64}.

\begin{lemma} \label{real: joint eigenvalue distribution }
If $\lambda_1 \geq ... \geq \lambda_m $ are the $m$ eigenvalues of $W_{m,n}$, 
then the joint probability density function of $\lambda_1 \geq ... \geq \lambda_m $ 
is
\begin{equation}
f(x_1, ..., x_m) =  K_{m,n} e^{-\frac{1}{2} \sum_{i=1}^{m} x_i } 
\prod_{i=1}^{m} x_i^{\frac{1}{2}(n-m-1)} 
\prod_{i=1}^{m-1} \prod_{j=i+1}^{m} ( x_i - x_j ), 
\end{equation}
where
\begin{equation}
K_{m,n}^{-1} = \left( \frac{2^n}{\pi} \right)^{m/2 } \prod_{i=1}^{m} \Gamma\left(\frac{n-m+i}{2}\right) 
\Gamma\left(\frac{i}{2}\right).
\end{equation}
\end{lemma}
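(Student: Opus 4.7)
The plan is to derive the joint eigenvalue density through the classical two-stage change of variables: first from the entries of $G_{m\times n}$ to the Wishart matrix $W=GG^T$, then from $W$ to its spectral decomposition. For the first stage I would start from the joint density of the entries of $G_{m\times n}$, namely $(2\pi)^{-mn/2}\exp(-\frac{1}{2}\mathrm{tr}(GG^T))$, and factor $G = TU$ where $T$ is $m\times m$ lower triangular with positive diagonal and $U$ is an $m\times n$ matrix with orthonormal rows (a point on the Stiefel manifold of $m$-frames in $n$-space). The Jacobian of this $QR$-type decomposition is $\prod_{i=1}^m t_{ii}^{n-i}$ times the surface element on the Stiefel manifold. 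Since $W = TT^T$ depends only on $T$, integrating out $U$ (whose total surface measure is a known product of sphere volumes involving $\Gamma((n-m+i)/2)$) and then changing variables from $T$ to $W$ yields the Wishart density on symmetric positive definite $m\times m$ matrices, proportional to $\det(W)^{(n-m-1)/2}\exp(-\frac{1}{2}\mathrm{tr}(W))$.

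For the second stage, I would push this density forward under the spectral map $W\mapsto(\Lambda,Q)$, where $\Lambda = \mathrm{diag}(\lambda_1,\dots,\lambda_m)$ with $\lambda_1 > \cdots > \lambda_m > 0$ and $Q$ is orthogonal. Writing $dW = Q(d\Lambda + [Q^T\,dQ,\Lambda])Q^T$ and assembling the wedge product of the $\binom{m+1}{2}$ independent entries of $dW$ produces the Vandermonde factor $\prod_{i<j}(\lambda_i-\lambda_j)$, multiplied by the invariant one-form on the orthogonal group $O(m)$. Substituting $\det(W) = \prod\lambda_i$ and $\mathrm{tr}(W) = \sum\lambda_i$ and integrating out $Q$ over $O(m)$ (modulo sign and permutation ambiguities, since eigenvectors are determined only up to sign and the ordering $\lambda_1\geq\cdots\geq\lambda_m$ has been fixed) yields the claimed joint density.

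The main technical obstacle is the Jacobian of the spectral decomposition, which requires careful bookkeeping of the $2^m m!$ redundancy in $Q$ and of the normalization of Haar measure on $O(m)$. Once organized correctly, the constant $K_{m,n}^{-1} = (2^n/\pi)^{m/2}\prod_{i=1}^m \Gamma((n-m+i)/2)\Gamma(i/2)$ assembles from three pieces: the Gaussian prefactor $(2\pi)^{-mn/2}$, the Stiefel manifold volume (contributing the $\Gamma((n-m+i)/2)$ factors), and the volume of $O(m)$ (contributing the $\Gamma(i/2)$ factors). As an alternative, one could verify the normalization after the fact by evaluating the Selberg-type integral of $e^{-\sum x_i/2}\prod x_i^{(n-m-1)/2}\prod_{i<j}(x_i-x_j)$ over the ordered positive orthant and matching it against unity.
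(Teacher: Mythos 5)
The paper does not prove this lemma at all: it is quoted verbatim from the literature (the citation to James, 1964), so there is no ``paper's proof'' to match against. Your outline is the classical two-stage derivation (entries of $G$ $\to$ Wishart density via the $G=TU$ factorization and integration over the Stiefel manifold, then Wishart density $\to$ eigenvalues via the spectral-decomposition Jacobian), and it is the standard and correct route; the constant does assemble as you describe, since $(2\pi)^{-mn/2}$ times the Stiefel volume $2^m\pi^{mn/2}/\Gamma_m(n/2)$ and the $O(m)$ volume $\pi^{m^2/2}/\Gamma_m(m/2)$, with $\Gamma_m(a)=\pi^{m(m-1)/4}\prod_{i=1}^m\Gamma\bigl(a-\tfrac{i-1}{2}\bigr)$, reduces exactly to $K_{m,n}^{-1}=(2^n/\pi)^{m/2}\prod_{i=1}^m\Gamma\bigl(\tfrac{n-m+i}{2}\bigr)\Gamma\bigl(\tfrac{i}{2}\bigr)$ after reindexing. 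One small correction: because the density is stated for \emph{ordered} eigenvalues $\lambda_1\geq\cdots\geq\lambda_m$, the redundancy in $Q$ to be quotiented out is only the $2^m$ sign choices, not $2^m m!$; the $m!$ would enter only if you passed to the symmetrized (unordered) density. Your fallback of verifying the normalization by a Selberg-type integral is also legitimate. What your approach buys over the paper's is self-containedness; what the citation buys is brevity, since the full Jacobian bookkeeping is lengthy and entirely standard.
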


Let $N(0,1)$ denote the standard normal distribution.
Let $\widetilde{N}(0,1)$ denote the
distribution of $u + i v$, where $u$ and $v$ are independent and identically distributed
$N(0,1)$ random variables, and $i=\sqrt{-1}$. 
Let $\widetilde{G}_{m \times n}$ be an $m \times n$ complex random matrix whose elements are independent and 
identically distributed $\widetilde{N}(0,1)$ random variables. 
Let $\widetilde{W}_{m,n}$ denote the $m \times m$ random matrix 
$\widetilde{G}_{m \times n} \widetilde{G}_{m \times n}^{H}$.
In literature, $\widetilde{W}_{m,n}$ is called the complex Wishart matrix.

Similar to the real case, there is also a simple relationship between
the condition number of $\widetilde{G}_{m \times n}$ and the eigenvalues of $\widetilde{W}_{m,n}$.
\begin{proposition} 
If  $\widetilde{\lambda}_{max}$ is the largest  eigenvalue of $\widetilde{W}_{m,n}$,
and $\widetilde{\lambda}_{min}$ is the smallest eigenvalue of $\widetilde{W}_{m,n}$, then
the 2-norm condition number of $\widetilde{G}_{m \times n}$ satisfies
$$
\kappa_2(\widetilde{G}_{m \times n})=\sqrt{\frac{\widetilde{\lambda}_{max}}{\widetilde{\lambda}_{min}}}.
$$
\end{proposition}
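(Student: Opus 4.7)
The plan is to mirror the real-case proposition exactly, replacing the transpose with the conjugate transpose throughout. The key observation is that the singular values of any matrix (real or complex) are the nonnegative square roots of the eigenvalues of $X X^H$ (equivalently $X^H X$), and the $2$-norm condition number is by definition the ratio of the largest to the smallest singular value. So the statement should follow directly once we identify the singular values of $\widetilde{G}_{m\times n}$ with square roots of eigenvalues of $\widetilde{W}_{m,n}$.

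First I would recall the singular value decomposition in the complex setting: any $m\times n$ complex matrix $\widetilde{G}_{m\times n}$ (with $m\leq n$) admits a factorization $\widetilde{G}_{m\times n}=U\Sigma V^H$ where $U\in\mathbb{C}^{m\times m}$ and $V\in\mathbb{C}^{n\times n}$ are unitary and $\Sigma=[\mathrm{diag}(\sigma_1,\dots,\sigma_m)\ |\ 0]\in\mathbb{R}^{m\times n}$ with $\sigma_1\geq\sigma_2\geq\cdots\geq\sigma_m\geq 0$. Then I would compute
\begin{equation*}
\widetilde{W}_{m,n}=\widetilde{G}_{m\times n}\widetilde{G}_{m\times n}^H = U\Sigma V^H V\Sigma^H U^H = U\,\mathrm{diag}(\sigma_1^2,\dots,\sigma_m^2)\,U^H,
\end{equation*}
which is a unitary diagonalization of $\widetilde{W}_{m,n}$ and hence exhibits its eigenvalues as $\sigma_1^2\geq\cdots\geq\sigma_m^2$. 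In particular $\widetilde{\lambda}_{\max}=\sigma_1^2$ and $\widetilde{\lambda}_{\min}=\sigma_m^2$.

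Finally I would invoke the definition of the $2$-norm condition number stated at the beginning of Section~2, which gives $\kappa_2(\widetilde{G}_{m\times n})=\sigma_1/\sigma_m$, and conclude
\begin{equation*}
\kappa_2(\widetilde{G}_{m\times n}) = \frac{\sigma_1}{\sigma_m} = \sqrt{\frac{\sigma_1^2}{\sigma_m^2}} = \sqrt{\frac{\widetilde{\lambda}_{\max}}{\widetilde{\lambda}_{\min}}}.
\end{equation*}

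There is essentially no obstacle here: the argument is a two-line consequence of the complex SVD, and the only mild care needed is to use $V^H V = I$ (not $V^T V$) when expanding $\widetilde{G}_{m\times n}\widetilde{G}_{m\times n}^H$, so that the product reduces cleanly to the Hermitian spectral form. The content is identical to the real-case proposition already stated, with transpose replaced by conjugate transpose and orthogonal matrices replaced by unitary matrices.
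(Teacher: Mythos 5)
Your proof is correct; the paper states this proposition without proof (as it does the real-case analogue), and your SVD argument is exactly the standard justification the authors implicitly rely on. Nothing is missing: the identification $\widetilde{W}_{m,n}=U\,\mathrm{diag}(\sigma_1^2,\dots,\sigma_m^2)\,U^H$ via $V^HV=I$ is the whole content.
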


Like the real case, the exact joint probability density function for 
the $m$ eigenvalues of  the complex Wishart matrix $\widetilde{W}_{m,n}$ 
can also be written down in a closed form \cite{James64}.
\begin{lemma} \label{complex: joint eigenvalue distribution }
If $\widetilde{\lambda}_1 \geq ... \geq \widetilde{\lambda}_m $ are the $m$ eigenvalues of $\widetilde{W}_{m,n}$, 
then the joint probability density function of 
$\widetilde{\lambda}_1 \geq ... \geq \widetilde{\lambda}_m $ is
\begin{equation}
\widetilde{f}(x_1, ..., x_m) =  \widetilde{K}_{m,n} e^{-\frac{1}{2} \sum_{i=1}^{m} x_i } 
\prod_{i=1}^{m} x_i^{n-m} 
\prod_{i=1}^{m-1} \prod_{j=i+1}^{m} ( x_i - x_j )^2, 
\end{equation}
where
\begin{equation}
\widetilde{K}_{m,n}^{-1} = 2^{m n} \prod_{i=1}^{m} \Gamma\left(n-m+i\right) 
\Gamma\left(i\right).
\end{equation}
\end{lemma}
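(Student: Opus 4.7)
Since the statement is the classical joint eigenvalue density of a complex Wishart matrix \cite{James64}, my plan is to mirror the derivation that underlies Lemma~\ref{real: joint eigenvalue distribution }, adjusted to the complex (Hermitian) setting. The argument has three steps: (1) write the joint density of the entries of $\widetilde{G}_{m \times n}$; (2) push it forward onto the positive-definite Hermitian cone to get the density of $\widetilde{W}_{m,n}=\widetilde{G}_{m\times n}\widetilde{G}_{m\times n}^H$; and (3) change variables to the spectral decomposition and integrate out the unitary factor.

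First, each entry $\widetilde{g}_{ij}=u_{ij}+iv_{ij}$ has density $\frac{1}{2\pi}e^{-(u_{ij}^2+v_{ij}^2)/2}$, so the joint density of the $2mn$ real parameters of $\widetilde{G}_{m\times n}$ is $(2\pi)^{-mn}\exp\bigl(-\frac{1}{2}\mbox{tr}(\widetilde{G}_{m\times n}\widetilde{G}_{m\times n}^H)\bigr)$ with respect to Lebesgue measure. Next, applying the complex analogue of the Bartlett decomposition (equivalently, a QR-type factorization of $\widetilde{G}_{m\times n}^H$) and integrating out the unitary ``right factor'' yields, for $n\ge m$, the complex Wishart density on positive-definite Hermitian $m\times m$ matrices of the form $c_{m,n}\det(\widetilde{W}_{m,n})^{n-m}\exp\bigl(-\frac{1}{2}\mbox{tr}(\widetilde{W}_{m,n})\bigr)$ with an explicit normalizing constant $c_{m,n}$.

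For step (3), I substitute the spectral decomposition $\widetilde{W}_{m,n}=U\Lambda U^H$, where $\Lambda=\mbox{diag}(\widetilde{\lambda}_1,\ldots,\widetilde{\lambda}_m)$ and $U$ is unitary. The Jacobian of this change of coordinates from the space of Hermitian matrices to (ordered eigenvalues, unitary eigenvector frame) equals the squared Vandermonde $\prod_{i<j}(\widetilde{\lambda}_i-\widetilde{\lambda}_j)^2$; the exponent is $2$ rather than $1$ because each off-diagonal Hermitian entry carries two real degrees of freedom, and this is precisely what distinguishes the complex case from the real one. Since the remaining integrand is invariant under $U$, integration over $U$ modulo the diagonal phase torus $T^m$ reduces to computing the Haar volume of the flag manifold $U(m)/T^m$, a standard calculation that produces a factor proportional to $\pi^{m(m-1)/2}/\prod_{i=1}^m\Gamma(i)$.

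The main obstacle will be the bookkeeping of constants: combining the $(2\pi)^{-mn}$ from step~(1), the $\Gamma(n-m+i)$ factors that emerge from the Bartlett integration in step~(2), and the Haar-volume factor from step~(3), while carefully tracking every power of $2$ and $\pi$ produced by the two real degrees of freedom per complex entry, should reproduce exactly $\widetilde{K}_{m,n}^{-1}=2^{mn}\prod_{i=1}^{m}\Gamma(n-m+i)\Gamma(i)$. Conceptually every ingredient is classical, but ensuring that the $\pi$'s cancel and the Gamma factors line up is the most error-prone aspect, and is where I would spend most of the effort.
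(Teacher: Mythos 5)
The paper does not prove this lemma at all: it is stated as a classical fact and attributed to James (1964), so there is no internal proof to compare against. Your outline is the standard derivation and it is sound: Gaussian entry density $\mapsto$ complex Wishart density on the Hermitian cone via a Bartlett/QR step $\mapsto$ eigenvalue density via the spectral change of variables with squared-Vandermonde Jacobian and Haar integration over $U(m)/T^m$. The one point I would insist you make explicit, because it is exactly where the paper's normalization differs from most references, is the variance convention: here each entry is $u+iv$ with $u,v\sim N(0,1)$, so $E|\widetilde{g}_{ij}|^2=2$ and the matrix density is $(2\pi)^{-mn}\exp\bigl(-\tfrac12\mbox{tr}(\widetilde{G}\widetilde{G}^H)\bigr)$, i.e.\ the Wishart parameter matrix is $\Sigma=2I$ rather than $I$. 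Tracking this through gives $\det(\Sigma)^n=2^{mn}$ and the exponent $e^{-\frac12\sum x_i}$, and combining with $\Gamma_m(n)=\pi^{m(m-1)/2}\prod_{i=1}^m\Gamma(n-i+1)$ and the flag-manifold volume $\pi^{m(m-1)/2}/\prod_{i=1}^m\Gamma(i)$, the two powers of $\pi$ cancel and one lands exactly on $\widetilde{K}_{m,n}^{-1}=2^{mn}\prod_{i=1}^m\Gamma(n-m+i)\Gamma(i)$ (using $\prod_{i=1}^m\Gamma(n-i+1)=\prod_{i=1}^m\Gamma(n-m+i)$). So the bookkeeping you deferred does close up; quoting a reference that assumes unit-variance complex entries without this adjustment would be the only real way to go wrong.
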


In the process of deriving our upper and lower bounds for the tails
of the condition number distributions, some bounds for Gamma and incomplete
Gamma functions are very useful.

\begin{lemma} \label{integral inequality A}
Assume $ a > 0$, and $b > 0$. If $t \leq \frac{b}{a} $, then
$$
\int_0^t e^{-a x} x^b d x  \leq e^{-a t} t^{b+1}.
$$
\end{lemma}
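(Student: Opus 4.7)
The plan is to exploit the monotonicity of the integrand $g(x) = e^{-ax} x^b$ on the interval $[0,t]$. Since we are integrating a nonnegative function over a finite interval, and since the upper endpoint $t$ is constrained to lie below $b/a$, I suspect the integrand attains its maximum on $[0,t]$ precisely at the right endpoint $x=t$. If that is the case, bounding the integrand by its value at $t$ and integrating the constant gives exactly the claimed bound $e^{-at} t^{b+1}$.

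First I would differentiate: $g'(x) = e^{-ax} x^{b-1}(b - ax)$. Since $a,b>0$ and $x\geq 0$, the sign of $g'(x)$ on $(0,\infty)$ is the sign of $b-ax$, so $g$ is nondecreasing on $[0, b/a]$ and attains its maximum there at $x=b/a$. Under the hypothesis $t \leq b/a$, the whole interval $[0,t]$ lies inside the monotone region, so $g(x)\leq g(t)=e^{-at} t^b$ for every $x\in[0,t]$.

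Integrating this pointwise bound gives
\[
\int_0^t e^{-ax} x^b\, dx \;\leq\; \int_0^t e^{-at} t^b\, dx \;=\; e^{-at} t^{b+1},
\]
which is exactly the asserted inequality.

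There is no substantive obstacle here; the only minor point of care is the behavior of $g'$ at $x=0$ when $b<1$ (where $x^{b-1}$ blows up), but $g$ itself extends continuously to $0$ and is clearly increasing in a neighborhood of $0$, so the monotonicity conclusion on the closed interval $[0,t]$ is unaffected. Thus the proof is essentially a one-line application of the endpoint-maximum principle, and no appeal to the Wishart density or the earlier lemmas is needed.
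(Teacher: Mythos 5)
Your proof is correct, but it takes a different route from the paper's. You bound the integrand pointwise: since $g(x)=e^{-ax}x^b$ has $g'(x)=e^{-ax}x^{b-1}(b-ax)$, $g$ is increasing on $[0,b/a]$, so for $t\leq b/a$ you get $g(x)\leq g(t)$ on all of $[0,t]$ and the bound follows by integrating the constant $e^{-at}t^b$ over an interval of length $t$. The paper instead studies the difference $f(t)=\int_0^t e^{-ax}x^b\,dx - e^{-at}t^{b+1}$ as a function of the upper limit, computes $f'(t)=e^{-at}t^b(at-b)$, and observes that $f$ decreases on $[0,b/a]$ from $f(0)=0$, hence is nonpositive there. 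Both arguments pivot on the same critical point $x=b/a$ of the integrand, but yours is the more elementary ``rectangle bound'' and requires no discussion of the behavior of $f$ beyond $b/a$; the paper's version has the mild advantage of exhibiting exactly where the inequality is tight (at $t=0$) and reverses (for large $t$, since $f(\infty)>0$). Your aside about the singularity of $x^{b-1}$ at $0$ when $b<1$ is handled correctly: $g$ is continuous at $0$ with $g(0)=0$ and increasing on $(0,b/a]$, which is all the endpoint-maximum step needs.
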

\begin{proof} 
Let
$
f(t) = \int_0^t e^{- a x} x^b d x - e^{- a t} t^{b+1},
$
then
$
f^\prime(t) = e^{-a t} t^b ( 1 + at - (b+1) ).
$
So $f(t)$ decreases on $[0,\frac{b}{a}]$ and increases
on $[\frac{b}{a}, \infty ).$ Since
$
f(0)=0,
$
and
$
f(\infty) = \int_0^\infty e^{- a x} x^b d x > 0,
$
if $t \leq \frac{b}{a} $, then $f(t) < 0$. 
Therefore, if $t \leq \frac{b}{a} $, then
$
\int_0^t e^{-a x} x^b d x  \leq e^{-a t} t^{b+1}.
$
\qquad
\end{proof}

\begin{lemma} \label{integral inequality B}
Assume $ a > 0$, $b > 0$, and $  k > \frac{1}{a}$. If $t \geq \frac{kb}{ka-1}$, then
$$
\int_t^\infty e^{-ax} x^b d x \leq k e^{-at} t^{b}.
$$
\end{lemma}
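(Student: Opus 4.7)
The plan is to mirror the strategy used in Lemma \ref{integral inequality A}: introduce the difference function
\[
g(t) = \int_t^\infty e^{-ax} x^b \, dx - k e^{-at} t^b,
\]
and show that $g(t) \le 0$ on the required interval by analyzing its monotonicity together with a boundary value at infinity.

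First I would differentiate under the integral sign. The derivative of the first term is $-e^{-at} t^b$ (by the fundamental theorem of calculus), and the product rule applied to $k e^{-at} t^b$ gives $-ak e^{-at} t^b + kb e^{-at} t^{b-1}$. Collecting terms yields a clean factorization
\[
g'(t) = e^{-at} t^{b-1}\bigl((ak-1)t - kb\bigr).
\]
Since $k > 1/a$, the coefficient $ak-1$ is strictly positive, so $g'(t) \geq 0$ precisely when $t \geq \frac{kb}{ak-1}$. Hence $g$ is nondecreasing on $[\tfrac{kb}{ak-1}, \infty)$.

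Next I would pin down the limit. Both $\int_t^\infty e^{-ax}x^b\,dx$ and $ke^{-at}t^b$ tend to $0$ as $t \to \infty$ (the former because the tail of a convergent integral vanishes, the latter because exponential decay dominates polynomial growth, using $a>0$). Therefore $g(t) \to 0$. Combined with the monotonicity, a nondecreasing function on $[\tfrac{kb}{ak-1},\infty)$ with limit $0$ at $\infty$ must satisfy $g(t) \le 0$ throughout that interval, which is exactly the claimed inequality.

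There is no serious obstacle here; the proof is entirely parallel to that of Lemma \ref{integral inequality A}, with the only minor care point being to check that the sign assumption $ak-1>0$ is needed to invert the inequality $(ak-1)t \ge kb$ without flipping it, and to justify vanishing at infinity (which follows because $e^{-at} t^b$ decays and the improper integral is finite). Once these ingredients are in place, the conclusion is immediate.
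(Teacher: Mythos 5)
Your proof is correct and follows essentially the same route as the paper's: the same difference function, the same sign analysis of its derivative showing it is nondecreasing on $[\tfrac{kb}{ka-1},\infty)$, and the same conclusion from the vanishing limit at infinity. The only (harmless) difference is that you omit the paper's remark that the difference function is positive at $t=0$, which is not needed for the conclusion anyway.
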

\begin{proof} 
Let
$
f(t) = \int_t^\infty e^{-ax} x^b d x - k e^{-at} t^{b},
$
then
$ 
f^\prime(t) = e^{-at} t^b (-1 + ka -\frac{kb}{t}).
$
So $f(t)$ decreases on $[0, \frac{kb}{ka-1}]$ and increases
on $[\frac{kb}{ka-1}, \infty ).$ Since
$
f(0)=\int_0^\infty e^{- a x} x^b d x > 0,
$
and
$
f(\infty) = 0.
$
So, if $t \geq \frac{kb}{ka-1}$, then $f(t) < 0$. 
Therefore, if $t \leq \frac{kb}{ka-1}$, then
$
\int_t^\infty e^{-ax} x^b d x \leq k e^{-at} t^{b}.
$
\qquad
\end{proof}

\begin{lemma} \label{strling's formula}
If $\Gamma(x)=\int_0^\infty e^{-t} t^{x-1} d t$, where $x>0$, then 
\begin{equation}
\sqrt{2 \pi} x^{x+\frac{1}{2}} e^{-x}
< \Gamma(x+1) < 
\sqrt{2 \pi} x^{x+\frac{1}{2}} e^{-x+\frac{1}{12 x}},
\end{equation}
and
\begin{equation}
\Gamma(x+\frac{1}{2}) <  \Gamma(x) \sqrt{x}.
\end{equation}
\end{lemma}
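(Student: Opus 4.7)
The plan is to establish the two inequalities separately by classical techniques, since neither is specific to the random-matrix context of this paper.

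For the Stirling-type bounds on $\Gamma(x+1)$, I would invoke Binet's second formula, which provides the integral representation
\[
\log \Gamma(x+1) = \left(x + \frac{1}{2}\right)\log x - x + \frac{1}{2}\log(2\pi) + \mu(x),
\]
with
\[
\mu(x) = 2\int_0^\infty \frac{\arctan(t/x)}{e^{2\pi t} - 1}\, dt.
\]
The first step is to prove $0 < \mu(x) < \frac{1}{12x}$. The lower bound is immediate from positivity of the integrand. For the upper bound I would use $\arctan(t/x) < t/x$ for $t,x > 0$ together with the classical evaluation $\int_0^\infty \frac{t}{e^{2\pi t}-1}\, dt = \frac{1}{24}$ (which follows from $\zeta(2) = \pi^2/6$), giving $\mu(x) < 2 \cdot \frac{1}{x} \cdot \frac{1}{24} = \frac{1}{12x}$. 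Exponentiating the displayed identity then yields the required sandwich of $\Gamma(x+1)$ between $\sqrt{2\pi}\, x^{x+1/2} e^{-x}$ and $\sqrt{2\pi}\, x^{x+1/2} e^{-x + 1/(12x)}$.

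For the second inequality, the plan is to exploit the log-convexity of $\Gamma$ on $(0,\infty)$, which follows from H\"older's inequality applied to the integral definition of $\Gamma$ and is in fact strict. Since $x + \frac{1}{2} = \frac{1}{2}\bigl(x + (x+1)\bigr)$, strict convexity of $\log \Gamma$ yields
\[
\log \Gamma\!\left(x + \frac{1}{2}\right) < \frac{1}{2}\bigl(\log \Gamma(x) + \log \Gamma(x+1)\bigr) = \log \Gamma(x) + \frac{1}{2}\log x,
\]
where the last equality uses the functional equation $\Gamma(x+1) = x \Gamma(x)$. Exponentiating gives $\Gamma(x + \frac{1}{2}) < \Gamma(x)\sqrt{x}$.

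The main obstacle is the first inequality, specifically pinning down the constant $\frac{1}{12x}$ rigorously rather than asymptotically. That $\mu(x) = O(1/x)$ is routine, but the precise constant requires either Binet's integral representation together with the above Bose--Einstein-type evaluation, or alternatively an Euler--Maclaurin argument with explicit control of the Bernoulli remainder term (truncating the divergent Stirling series after its first term, with the usual alternating-series control). Neither route introduces conceptual difficulty, but each requires care in the estimates. The second inequality, by contrast, is essentially a one-line consequence of convexity and the recurrence relation, so no obstacle is expected there.
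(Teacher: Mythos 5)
Your proposal is correct. The paper does not actually prove this lemma: it disposes of (2.5) by citing formula 6.1.38 of Abramowitz--Stegun (which is exactly the statement $\Gamma(x+1)=\sqrt{2\pi}\,x^{x+1/2}e^{-x+\theta/(12x)}$ with $0<\theta<1$) and of (2.6) by citing a problem in Graham--Knuth--Patashnik. You instead supply self-contained derivations, and both are sound. For (2.5), Binet's second formula is the right tool: the lower bound follows from $\mu(x)>0$, and your upper bound is clean --- $\arctan(t/x)<t/x$ together with $\int_0^\infty t/(e^{2\pi t}-1)\,dt=\Gamma(2)\zeta(2)/(2\pi)^2=1/24$ gives precisely $\mu(x)<1/(12x)$, and the passage from Binet's formula for $\log\Gamma(x)$ to the displayed identity for $\log\Gamma(x+1)$ via the recurrence is handled correctly. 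For (2.6), strict log-convexity of $\Gamma$ (strict because $(\log\Gamma)''=\sum_{n\ge 0}(x+n)^{-2}>0$, or by the equality case of H\"older) applied at the midpoint of $x$ and $x+1$, combined with $\Gamma(x+1)=x\Gamma(x)$, gives the result in one line; this is presumably close in spirit to the cited exercise. The trade-off is the expected one: the paper's citations keep the exposition short, while your route makes the constant $1/(12x)$ --- which the paper relies on quantitatively in Theorem 5.5 --- fully verifiable without external references, at the cost of invoking Binet's representation (itself nontrivial to establish, though entirely classical). If one wanted to avoid Binet's formula altogether, the standard Euler--Maclaurin or Robbins-style elementary argument comparing $\log n!$ with $\sum\log k$ would also deliver the $1/(12x)$ bound, as you note.
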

\begin{proof} 
(2.5) follows straightforwardly from 6.1.38 in~\cite{Abram70}, and (2.6) can be obtained from
answer to Problem 9.60 in~\cite{Graham94}.
\qquad
\end{proof}





\section{Bounds for eigenvalue densities of Wishart matrices}

In this section, we will prove some bounds for the probability density
functions of the eigenvalues of Wishart matrices.
These bounds are very useful in the derivation of the bounds
for the tails of the condition number distributions.

Let $\lambda_{max}$ denote the largest  eigenvalue of $W_{m,n}$,
and $\lambda_{min}$ denote the smallest eigenvalue of $W_{m,n}$.
In the following lemma, we prove an upper bound for the joint probability density
function of $\lambda_{max}$ and $\lambda_{min}$.

\begin{lemma} \label{real: upper bound for joint eigenvalue}
Let  $ f_{\lambda_{max}, \lambda_{min}} (x,y) $ denote the joint
probability density function of $\lambda_{max}$ and $\lambda_{min}$, then 
$ f_{\lambda_{max}, \lambda_{min}} (x,y) $ satisfies:
\begin{eqnarray}
f_{\lambda_{max}, \lambda_{min}} (x,y)  \leq   
C_{m,n} e^{-\frac{1}{2} (x + y) } x^{\frac{1}{2}(n+m-3)}
y^{\frac{1}{2}(n-m-1)},
\end{eqnarray}
where 
\begin{eqnarray}
C_{m,n} = \frac{1}{4\Gamma \left(m-1\right) \Gamma\left(n-m+1\right) }.
\end{eqnarray}
\end{lemma}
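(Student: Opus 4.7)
The plan is to start with the full joint density of all $m$ eigenvalues from Lemma~\ref{real: joint eigenvalue distribution }, integrate out the intermediate eigenvalues $\lambda_2, \ldots, \lambda_{m-1}$, and bound the resulting expression by recognizing that what remains inside the integral is (a piece of) the eigenvalue density of a smaller Wishart matrix.

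Concretely, writing $x = \lambda_{\max}$, $y = \lambda_{\min}$, the marginal density is
\[
f_{\lambda_{\max},\lambda_{\min}}(x,y) = \int_{y \le x_{m-1} \le \cdots \le x_2 \le x} f(x, x_2, \ldots, x_{m-1}, y)\, dx_{m-1}\cdots dx_2.
\]
The first step is to isolate the factors in the Vandermonde product $\prod_{i<j}(x_i-x_j)$ that involve the extreme variables $x_1=x$ and $x_m=y$, namely
\[
(x_1 - x_m)\prod_{j=2}^{m-1}(x_1-x_j)\prod_{i=2}^{m-1}(x_i-x_m).
\]
Using the crude but effective bounds $x_1 - x_j \le x$, $x_1 - x_m \le x$, and $x_i - x_m \le x_i$ (all valid since the eigenvalues are nonnegative), this block is at most $x^{m-1}\prod_{i=2}^{m-1} x_i$. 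The extra factor of $\prod x_i$ raises the exponent in $\prod x_i^{(n-m-1)/2}$ to $(n-m+1)/2$, and the factor $x^{m-1}$ combines with $x^{(n-m-1)/2}$ to give exactly $x^{(n+m-3)/2}$; the $y^{(n-m-1)/2}$ survives untouched.

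The second step is to handle the remaining integral
\[
\int_{y \le x_{m-1} \le \cdots \le x_2 \le x} e^{-\frac{1}{2}\sum_{i=2}^{m-1} x_i}\prod_{i=2}^{m-1} x_i^{(n-m+1)/2}\prod_{2\le i<j\le m-1}(x_i-x_j)\,dx_{m-1}\cdots dx_2.
\]
The integrand is nonnegative, so enlarging the region to $0 \le x_{m-1}\le\cdots\le x_2 < \infty$ only increases the value. Setting $m' = m-2$ and $n' = n$, the resulting unrestricted integral is precisely the normalization constant $1/K_{m-2,n}$ for the joint eigenvalue density of $W_{m-2,n}$ given by Lemma~\ref{real: joint eigenvalue distribution }. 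Hence the whole expression is bounded by $(K_{m,n}/K_{m-2,n})\,e^{-(x+y)/2}\,x^{(n+m-3)/2}\,y^{(n-m-1)/2}$.

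The third and final step is the arithmetic: showing $K_{m,n}/K_{m-2,n} = \frac{1}{4\Gamma(m-1)\Gamma(n-m+1)}$. Most factors in the definition of $K_{m,n}^{-1}$ cancel against those of $K_{m-2,n}^{-1}$ after the index shift $j=i+2$, leaving only a factor $\pi/2^n$ together with $\Gamma(\tfrac{n-m+1}{2})\Gamma(\tfrac{n-m+2}{2})\Gamma(\tfrac{m-1}{2})\Gamma(\tfrac{m}{2})$ in the denominator. Applying the Legendre duplication formula $\Gamma(z)\Gamma(z+\tfrac{1}{2})=2^{1-2z}\sqrt{\pi}\,\Gamma(2z)$ twice collapses this to $\Gamma(n-m+1)\Gamma(m-1)$ times the right power of $2$, producing exactly $C_{m,n}$. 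I expect the Vandermonde factorization/bookkeeping step to be the main thing to execute carefully (so that every factor is accounted for and every inequality points the right way); the duplication-formula cancellation is mechanical once set up.
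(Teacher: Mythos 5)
Your proposal is correct and follows essentially the same route as the paper's proof: bound the Vandermonde factors involving the extreme eigenvalues by $x$ and $x_i$ respectively, enlarge the integration region so that the remaining integral is recognized as the normalization constant $K_{m-2,n}^{-1}$, and evaluate $K_{m,n}/K_{m-2,n}$ via the duplication formula. All exponents and the constant check out.
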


\begin{proof}
Let  $R_{x,y} = \{(x_2, x_3, . . ., x_{m-1}): 
                   x \geq x_2 \geq . . . \geq x_{m-1} \geq y \} \subseteq R^{m-2}$.
From the joint probability density function of the $m$ eigenvalues of $W_{m,n}$ in Lemma 2.2, we have
\begin{eqnarray}
f_{\lambda_{max}, \lambda_{min}} (x,y)
& = &  \int_{R_{x,y}} f(x, x_2, ..., x_{m-1}, y) d x_2 d x_3 ... d x_{m-1} \nonumber \\ 
& = &  K_{m,n} e^{-\frac{1}{2}(x+y) } x^{\frac{1}{2}(n-m-1)} y^{\frac{1}{2}(n-m-1)} \nonumber \\ & & \quad 
       \int_{R_{x,y}} e^{-\frac{1}{2} \sum_{i=2}^{m-1}x_i }  
       \prod_{i=2}^{m-1} x_i^{\frac{1}{2}(n-m-1)}   
              \nonumber \\[-1.5ex]
             \label{eq3.10}\\[-1.5ex] & & \qquad 
       (x-y) \prod_{i=2}^{m-1} ( x - x_i )  ( x_i - y )  
       \prod_{i=2}^{m-2} \prod_{j=i+1}^{m-1} ( x_i - x_j ) \prod_{i=2}^{m-1}d x_i.\nonumber
\end{eqnarray}

Let $R_{m-2} = \{(x_2, x_3, . . ., x_{m-1}): x_2 \geq . . . \geq x_{m-1} \geq 0\}$,
then $R_{m-2} \subseteq R_{x,y} $. Note that, in (3.3), $x \geq x_i \geq y$ for $i=2, 3, ..., m-1$.
Replacing $x-y$ and $x - x_i$ by $x$, and $x_i - y $ by $x_i$ for 
$i=2, 3, ..., m-1$, and $R_{x,y}$ by $R_{m-2}$, then we get
\begin{eqnarray}
f_{\lambda_{max}, \lambda_{min}} (x,y)
& \leq &   K_{m,n} e^{-\frac{1}{2}(x+y) } x^{\frac{1}{2}(n+m-3)} y^{\frac{1}{2}(n-m-1)}
           \nonumber \\
          & & \quad 
          \int_{R_{m-2}} e^{-\frac{1}{2} \sum_{i=2}^{m-1}x_i }  
          \prod_{i=2}^{m-1} x_i^{\frac{1}{2}(n-m+1)}          
          \prod_{i=2}^{m-2} \prod_{j=i+1}^{m-1} ( x_i - x_j ) \prod_{i=2}^{m-1}d x_i. \label{eq2.13} 
\end{eqnarray}

Note that $f(x_1, x_2, ..., x_m)$ in (2.1) is 
a probability density function, therefore, for any $m \leq n$, we have
\begin{eqnarray*}
\int_{R_{m}}  e^{-\frac{1}{2} \sum_{i=1}^{m} x_i } 
\prod_{i=1}^{m} x_i^{\frac{1}{2}(n-m-1)} 
\prod_{i=1}^{m-1} \prod_{j=i+1}^{m} ( x_i - x_j ) \prod_{i=1}^{m} dx_i
& = &   K_{m,n}^{-1}, 
\end{eqnarray*}
where $R_{m} = \{x_1 \geq x_2 \geq ... \geq x_{m} \geq 0 \} \subseteq R^{m}$. 
Therefore, we have
\begin{eqnarray}
\int_{R_{m-2}} e^{-\frac{1}{2} \sum_{i=2}^{m-1}x_i }  
\prod_{i=2}^{m-1} x_i^{\frac{1}{2}(n-m+1)}          
\prod_{i=2}^{m-2} \prod_{j=i+1}^{m-1} ( x_i - x_j ) \prod_{i=2}^{m-1}d x_i
& = &   K_{m-2,n}^{-1}. 
\end{eqnarray}
Substitute (3.5) into (3.4), we obtain
\begin{eqnarray}
f_{\lambda_{max}, \lambda_{min}} (x,y)
& \leq & \frac{ K_{m,n} }{ K_{m-2,n} }  
         e^{-\frac{1}{2}(x+y) } x^{\frac{1}{2}(n+m-3)} y^{\frac{1}{2}(n-m-1)}.
\end{eqnarray}

From (2.2), we have 
\begin{eqnarray}
\frac{ K_{m,n} }{ K_{m-2,n} } 
& = & \frac{\pi}{2^n} \frac{1}{ \Gamma \left( \frac{m-1}{2}\right) \Gamma \left( \frac{m}{2}\right)   
      \Gamma \left( \frac{n-m+1}{2}\right) \Gamma \left( \frac{n-m+2}{2}\right) }  
             \nonumber \\[-1.5ex]
             \label{eq2.14}\\[-1.5ex]
& = & \frac{1}{4\Gamma \left(m-1\right) \Gamma\left(n-m+1\right) }. \nonumber
\end{eqnarray}
Substitute (3.6) into (3.5), we get (3.1) and (3.2) .
\qquad
\end{proof}

Let $\widetilde{\lambda}_{max}$ denote the largest  eigenvalue of $\widetilde{W}_{m,n}$,
and $\widetilde{\lambda}_{min}$ denote the smallest eigenvalue of $\widetilde{W}_{m,n}$.
Similar to the real case, in the following lemma, 
we give an upper bound for the joint probability density
function of $\widetilde{\lambda}_{max}$ and $\widetilde{\lambda}_{min}$.
The upper bound in complex case can be proved using the same techniques used in the real case. 
Therefore, we omit the proof and only give the result here.

\begin{lemma} \label{complex: upper bound for joint eigenvalue}
Let  $ \widetilde{f}_{ {\widetilde{\lambda}}_{max}, {\widetilde{\lambda}}_{min} } (x,y) $ denote the joint
probability density function of $\widetilde{\lambda}_{max}$ and $\widetilde{\lambda}_{min}$, then 
$ \widetilde{f}_{ {\widetilde{\lambda}}_{max}, {\widetilde{\lambda}}_{min} } (x,y) $ satisfies:
\begin{eqnarray}
\widetilde{f}_{ \widetilde{\lambda}_{max},\widetilde{\lambda}_{min} } (x,y)   \leq   
\widetilde{C}_{m,n} e^{-\frac{1}{2} (x + y) } x^{n+m-2}
y^{n-m},
\end{eqnarray}
where 
\begin{eqnarray}
\widetilde{C}_{m,n} = \frac{1}{2^{2n} \Gamma(m-1)\Gamma(m)\Gamma(n-m+1)\Gamma(n-m+2) }.
\end{eqnarray}
\end{lemma}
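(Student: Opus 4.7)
My plan is to follow the template of Lemma 3.1's proof, making the adjustments needed for the complex density from Lemma 2.4. First I would marginalize the full joint density by writing
$\widetilde{f}_{\widetilde{\lambda}_{max}, \widetilde{\lambda}_{min}}(x,y) = \int_{\widetilde{R}_{x,y}} \widetilde{f}(x, x_2, \ldots, x_{m-1}, y)\, dx_2 \cdots dx_{m-1}$,
where $\widetilde{R}_{x,y} = \{(x_2,\ldots,x_{m-1}) : x \ge x_2 \ge \cdots \ge x_{m-1} \ge y\}$. I would then pull all factors depending only on the endpoints outside the integral: the $e^{-(x+y)/2}$, the $x^{n-m} y^{n-m}$ from the power product, and from the squared Vandermonde $\prod_{i<j}(x_i - x_j)^2$ the endpoint contribution $(x-y)^2 \prod_{i=2}^{m-1}(x-x_i)^2(x_i-y)^2$.

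The pivotal step is to bound the endpoint Vandermonde factors pointwise using $x - y \le x$, $x - x_i \le x$, and $x_i - y \le x_i$ (the last invoking $y \ge 0$), and then to enlarge the integration region to $R_{m-2} = \{x_2 \ge \cdots \ge x_{m-1} \ge 0\}$. The endpoint factors now contribute $x^2 \cdot x^{2(m-2)} = x^{2(m-1)}$, which combines with the prefactor $x^{n-m}$ to give $x^{n+m-2}$, matching the statement; the leftover $\prod_{i=2}^{m-1} x_i^2$ absorbs into the bulk integrand and raises the exponent of each $x_i$ from $n-m$ to $n-m+2$. The crucial recognition is that, since $n - m + 2 = n - (m-2)$, the remaining bulk integral is precisely the normalization integral of the density in Lemma 2.4 with $m$ replaced by $m-2$, and so evaluates to $\widetilde{K}_{m-2,n}^{-1}$.

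This leaves only the algebraic task of checking that $\widetilde{K}_{m,n}/\widetilde{K}_{m-2,n} = \widetilde{C}_{m,n}$. From the closed form in Lemma 2.4, the $\Gamma(i)$ chain telescopes to $1/[\Gamma(m-1)\Gamma(m)]$, the $\Gamma(n-m+i)$ chain telescopes (after the index shift $m \mapsto m-2$, which shifts its argument by $2$) to $1/[\Gamma(n-m+1)\Gamma(n-m+2)]$, and the powers of $2$ combine to $2^{-2n}$. I expect the main obstacle to be exactly this gamma-function bookkeeping, together with the degenerate case $m = 2$, where the bulk region is empty and one must interpret the empty product in $\widetilde{K}_{0,n}^{-1}$ as $1$; otherwise the complex setting is in fact cleaner than the real one, since all Gamma arguments here are integers rather than half-integers, so no use of Lemma 2.5 is needed.
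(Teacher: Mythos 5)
Your proposal is correct and is exactly the argument the paper intends: the paper omits this proof, stating only that it follows by the same technique as Lemma 3.1, and your adaptation (bounding $(x-y)^2\prod(x-x_i)^2(x_i-y)^2$ by $x^{2(m-1)}\prod x_i^2$, enlarging the region, recognizing the bulk integral as $\widetilde{K}_{m-2,n}^{-1}$, and telescoping the Gamma products to get $\widetilde{K}_{m,n}/\widetilde{K}_{m-2,n}=\widetilde{C}_{m,n}$) checks out, including the $m=2$ degenerate case. The only trivial slip is your aside about Lemma 2.5: what the real case needs for the half-integer Gammas is the Legendre duplication formula in (3.7), not Lemma 2.5, but you are right that the complex case avoids this entirely.
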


Bounds for the probability density functions of the smallest eigenvalues
are also very useful in the derivation of the bounds
for the tails of the condition number distributions.
In the following lemma, we prove upper and
lower bounds for the probability density function of the smallest
eigenvalue of a real Wishart matrix.

\begin{lemma} \label{real: upper bound for smallest eigenvalue}
Let  $ f_{\lambda_{min}} (x) $ denotes the probability 
density function of the smallest eigenvalue of $W_{m,n}$, then 
$ f_{\lambda_{min}} (x) $ satisfies:
\begin{eqnarray}
L_{m,n} e^{-\frac{m}{2}x } x^{\frac{1}{2}(n-m-1)}
\leq  f_{\lambda_{min}} (x) \leq 
L_{m,n} e^{-\frac{1}{2}x } x^{\frac{1}{2}(n-m-1)},
\end{eqnarray}
where
\begin{eqnarray}
L_{m,n} = \frac{2^{\frac{n-m-1}{2}}  \Gamma \left( \frac{n+1}{2}\right)   }
            { \Gamma \left( \frac{m}{2}\right)\Gamma\left(n-m+1\right) }. 
\end{eqnarray}
\end{lemma}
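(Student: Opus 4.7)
The plan is to derive both bounds by integrating the joint density of Lemma 2.2 over the $m-1$ eigenvalues of $W_{m,n}$ that exceed $\lambda_{min}$. Writing $y$ for the value of $\lambda_{min}$ and $R_y=\{(x_1,\ldots,x_{m-1}):x_1\geq x_2\geq\cdots\geq x_{m-1}\geq y\}$, the starting point is
$$
f_{\lambda_{min}}(y) = K_{m,n}\,e^{-y/2}\,y^{(n-m-1)/2}\int_{R_y}e^{-\frac12\sum_{i=1}^{m-1}x_i}\prod_{i=1}^{m-1}x_i^{(n-m-1)/2}\prod_{i=1}^{m-1}(x_i-y)\prod_{1\leq i<j\leq m-1}(x_i-x_j)\,dx_1\cdots dx_{m-1}.
$$
Both bounds are obtained by manipulating the inner integral so that it reduces to a Wishart normalization constant; in fact both will share the same leading coefficient $K_{m,n}/K_{m-1,n+1}$.

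For the upper bound, I would use $0\leq x_i-y\leq x_i$ on $R_y$ and then enlarge the region to $R_{m-1}=\{x_1\geq\cdots\geq x_{m-1}\geq 0\}$ (the integrand is nonnegative there). The residual integrand $e^{-\frac12\sum x_i}\prod x_i^{(n-m+1)/2}\prod_{i<j}(x_i-x_j)$ is precisely the unnormalized joint eigenvalue density of $W_{m-1,\,n+1}$ (since $(n+1)-(m-1)-1=n-m+1$), so by Lemma 2.2 its integral equals $K_{m-1,n+1}^{-1}$.

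For the lower bound, I would translate by $u_i=x_i-y$. The exponential combines as $e^{-y/2}\cdot e^{-\frac12\sum(u_i+y)}=e^{-my/2}\,e^{-\frac12\sum u_i}$, which explains the $e^{-mx/2}$ factor in the lemma; the region becomes $u_1\geq\cdots\geq u_{m-1}\geq 0$ and $\prod(x_i-y)$ becomes $\prod u_i$. Applying the pointwise inequality $(u_i+y)^{(n-m-1)/2}\geq u_i^{(n-m-1)/2}$ (valid when $n-m-1\geq 0$) again leaves the unnormalized joint eigenvalue density of $W_{m-1,n+1}$, which integrates to $K_{m-1,n+1}^{-1}$.

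It then remains to show $K_{m,n}/K_{m-1,n+1}=L_{m,n}$. From Lemma 2.2, the $\Gamma(i/2)$ products cancel down to $1/\Gamma(m/2)$, the overlapping $\Gamma((n-m+i)/2)$ products cancel down to $\Gamma((n+1)/2)/[\Gamma((n-m+1)/2)\Gamma((n-m+2)/2)]$, and the surviving powers of $2$ and $\pi$ combine to $2^{(m-n-1)/2}\pi^{1/2}$. Applying Legendre's duplication formula at $z=(n-m+1)/2$,
$$
\Gamma\!\left(\frac{n-m+1}{2}\right)\Gamma\!\left(\frac{n-m+2}{2}\right)=2^{m-n}\sqrt{\pi}\,\Gamma(n-m+1),
$$
absorbs the spurious $\sqrt{\pi}$ and collapses the two gamma factors into $\Gamma(n-m+1)$, yielding exactly $L_{m,n}$. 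The main obstacle is the careful bookkeeping of the $2$ and $\pi$ exponents together with the gamma cancellations in the ratio of normalizers; a subtler point is that the pointwise inequality in the lower-bound step requires $n\geq m+1$, so the boundary case $n=m$ (where the exponent $(n-m-1)/2$ equals $-1/2$) apparently demands a separate refinement.
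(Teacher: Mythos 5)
Your proposal is correct and follows essentially the same route as the paper: the lower bound via the translation $u_i=x_i-y$ (which produces the $e^{-mx/2}$ factor) followed by the pointwise replacement $(u_i+y)^{(n-m-1)/2}\geq u_i^{(n-m-1)/2}$, the reduction of both inner integrals to $K_{m-1,n+1}^{-1}$, and the same evaluation $K_{m,n}/K_{m-1,n+1}=L_{m,n}$ via the duplication formula. The only difference is that for the upper bound the paper simply cites Edelman's earlier work rather than writing out the (correct) domain-enlargement argument you give. Your caveat about the boundary case $n=m$, where the exponent $\frac{1}{2}(n-m-1)=-\frac12$ makes the pointwise inequality reverse, is well taken --- but note that the paper's own lower-bound step has exactly the same restriction and passes over it in silence, so this is not a defect of your argument relative to the paper's.
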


\begin{proof}
Let  $R_{x} = \{(x_1, x_2, . . ., x_{m-1}): 
                 x_1 \geq . . . \geq x_{m-1} \geq x \} \subseteq R^{m-1}$.
From the joint probability density function of the eigenvalues of $W_{m,n}$ in Lemma 2.2, we have
\begin{eqnarray*}
f_{\lambda_{min}} (x)
& = &  \int_{R_x} f(x_1, x_2, ..., x_{m-1}, x) dx_1 dx_2 dx_{m-1} \nonumber \\
& = &  K_{m,n} e^{-\frac{1}{2}x } x^{\frac{1}{2}(n-m-1)}
       \int_{R_x} e^{-\frac{1}{2} \sum_{i=1}^{m-1}x_i } 
       \prod_{i=1}^{m-1} x_i^{\frac{1}{2}(n-m-1)} \label{eq3.12}
        \\ & & \quad \prod_{i=1}^{m-1} ( x_i - x )
       \prod_{i=1}^{m-2} \prod_{j=i+1}^{m-1} ( x_i - x_j )  \prod_{i=1}^{m-1} dx_i. \nonumber 
\end{eqnarray*}

For the lower bound part, taking the transformation 
$y_i=x_i-x$, where $i=1, 2, ..., m-1$, we have
\begin{eqnarray*}
f_{\lambda_{min}} (x)
& = &  K_{m,n} e^{-\frac{m}{2}x } x^{\frac{1}{2}(n-m-1)}
       \int_{R_y}  e^{-\frac{1}{2} \sum_{i=1}^{m-1} y_i } 
       \prod_{i=1}^{m-1} (y_i+x)^{\frac{1}{2}(n-m-1)} \\ & & \quad 
       \prod_{i=1}^{m-1} y_i
       \prod_{i=1}^{m-2} \prod_{j=i+1}^{m-1} ( y_i - y_j )  \prod_{i=1}^{m-1} d y_i,
\end{eqnarray*}
where $R_y = \{y_1 \geq y_2 \geq ... \geq y_{m-1} \geq 0 \} \subseteq R^{m-1}$. 

Replacing $ y_i+x$ by $ y_i $ for $i=1, 2, ..., m-1$, we obtain
\begin{eqnarray*}
f_{\lambda_{min}} (x)
&\geq& K_{m,n} e^{-\frac{m}{2}x } x^{\frac{1}{2}(n-m-1)}
       \int_{R_y}  e^{-\frac{1}{2} \sum_{i=1}^{m-1} y_i } 
       \prod_{i=1}^{m-1} y_i^{\frac{1}{2}(n-m+1)} \\ & & \quad 
       \prod_{i=1}^{m-2} \prod_{j=i+1}^{m-1} ( y_i - y_j )  \prod_{i=1}^{m-1} d y_i.
\end{eqnarray*}
Note that 
\begin{eqnarray*}
\int_{R_y}  e^{-\frac{1}{2} \sum_{i=1}^{m-1} y_i } 
\prod_{i=1}^{m-1} y_i^{\frac{1}{2}(n-m+1)}
\prod_{i=1}^{m-2} \prod_{j=i+1}^{m-1} ( y_i - y_j )  \prod_{i=1}^{m-1} d y_i
& = &  K_{m-1,n+1} ^{-1}. 
\end{eqnarray*}
Therefore, we obtain
\begin{eqnarray}
f_{\lambda_{min}} (x)
&\geq& \frac{K_{m,n}} {K_{m-1,n+1}} e^{-\frac{m}{2}x } x^{\frac{1}{2}(n-m-1)}.
\end{eqnarray}

For the upper bound part, from \cite{Edelman88}, we have
\begin{eqnarray}
f_{\lambda_{min}} (x)
& \leq & \frac{ K_{m,n}}{ K_{m-1,n+1}}  e^{-\frac{1}{2}x } x^{\frac{1}{2}(n-m-1)}.
\end{eqnarray}

From (2.2), we have 
\begin{eqnarray}
\frac{ K_{m,n} }{ K_{m-1,n+1} } 
& = & \frac{ \sqrt\pi \left(\frac{1}{2}\right)^{\frac{n-m+1}{2}}  \Gamma \left( \frac{n+1}{2}\right)   }
            { \Gamma \left( \frac{m}{2}\right) \Gamma \left( \frac{n-m+1}{2}\right) 
              \Gamma \left( \frac{n-m+2}{2}\right)} 
                  \nonumber \\[-1.5ex]
                  \label{eq3.14}\\[-1.5ex]
& = & \frac{2^{\frac{n-m-1}{2}}  \Gamma \left( \frac{n+1}{2}\right)   }
            { \Gamma \left( \frac{m}{2}\right)\Gamma\left(n-m+1\right) }. \nonumber
\end{eqnarray}
Substitute (3.14) into (3.13) and (3.12), we get (3.10) and (3.11) .
\qquad
\end{proof}

Similar to the real case, in the following lemma, 
we give upper and lower bounds for the probability density
function of the smallest eigenvalue $\widetilde{\lambda}_{min}$ of 
a complex Wishart matrix. These bounds can be proved using the 
same techniques used in the real case. 
Therefore, we omit the proof and only give the result here.

\begin{lemma} \label{complex: upper bound for smallest eigenvalue}
Let  $\widetilde{f}_{\widetilde{\lambda}_{min}} (x) $ denotes the probability 
density function of the smallest eigenvalue of $\widetilde{W}_{m,n}$, then 
$ \widetilde{f}_{\widetilde{\lambda}_{min}} (x) $ satisfies:
\begin{eqnarray}
\widetilde{L}_{m,n} e^{-\frac{m}{2}x } x^{n-m}
\leq  \widetilde{f}_{\widetilde{\lambda}_{min}} (x) \leq 
\widetilde{L}_{m,n} e^{-\frac{1}{2}x } x^{n-m},
\end{eqnarray}
where
\begin{eqnarray}
\widetilde{L}_{m,n} = \frac{ \Gamma(n+1) }
            { 2^{n-m+1}  \Gamma(m) \Gamma(n-m+1)\Gamma(n-m+2) }. 
\end{eqnarray}
\end{lemma}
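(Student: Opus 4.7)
The plan is to mirror the proof of Lemma 3.3 (the real case), but working from the complex joint eigenvalue density in Lemma 2.4. First I would write
$$
\widetilde{f}_{\widetilde{\lambda}_{min}}(x) = \int_{\widetilde{R}_x} \widetilde{f}(x_1,\ldots,x_{m-1},x)\,dx_1\cdots dx_{m-1},
$$
where $\widetilde{R}_x=\{x_1\geq\cdots\geq x_{m-1}\geq x\}$, and pull out the factors depending only on $x$: $e^{-x/2}$ and $x^{n-m}$, as well as the factors $\prod_{i=1}^{m-1}(x_i-x)^2$ that come from the squared Vandermonde. What is left inside the integral is a function of $x_1,\ldots,x_{m-1}$ only, and the goal is to recognize a rescaled version of $\widetilde{K}_{m-1,n+1}^{-1}$, since dropping $m-1$ and raising $n$ by $1$ keeps the exponent $n-m$ intact while introducing two extra powers of each $x_i$, exactly matching the $(x_i-x)^2$ factors.

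For the upper bound, I would simply use $(x_i-x)^2\leq x_i^2$ on $\widetilde{R}_x$ and then enlarge the region to $\{x_1\geq\cdots\geq x_{m-1}\geq 0\}$. The resulting integral is precisely $\widetilde{K}_{m-1,n+1}^{-1}$, so
$$
\widetilde{f}_{\widetilde{\lambda}_{min}}(x)\leq \frac{\widetilde{K}_{m,n}}{\widetilde{K}_{m-1,n+1}}\,e^{-x/2}\,x^{n-m}.
$$
For the lower bound, I would substitute $y_i=x_i-x$, which turns $e^{-x_i/2}$ into $e^{-x/2}e^{-y_i/2}$ (contributing a total extra factor $e^{-(m-1)x/2}$, so together with the leading $e^{-x/2}$ we obtain $e^{-mx/2}$), leaves $(x_i-x_j)^2=(y_i-y_j)^2$ unchanged, and replaces $\prod x_i^{n-m}(x_i-x)^2$ by $\prod (y_i+x)^{n-m} y_i^2$. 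Since $n-m\geq 0$ and $y_i,x\geq 0$, we have $(y_i+x)^{n-m}\geq y_i^{n-m}$, so replacing $y_i+x$ by $y_i$ gives a lower bound. The remaining integral over $\{y_1\geq\cdots\geq y_{m-1}\geq 0\}$ is again $\widetilde{K}_{m-1,n+1}^{-1}$, yielding
$$
\widetilde{f}_{\widetilde{\lambda}_{min}}(x)\geq \frac{\widetilde{K}_{m,n}}{\widetilde{K}_{m-1,n+1}}\,e^{-mx/2}\,x^{n-m}.
$$

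The final step is the constant computation. Using the formula for $\widetilde{K}_{m,n}^{-1}$ from Lemma 2.4, the power of $2$ contributes $2^{(m-1)(n+1)-mn}=2^{-(n-m+1)}$, and the Gamma ratio telescopes:
$$
\frac{\prod_{i=1}^{m-1}\Gamma(n-m+2+i)\,\Gamma(i)}{\prod_{i=1}^{m}\Gamma(n-m+i)\,\Gamma(i)}=\frac{\Gamma(n+1)}{\Gamma(m)\,\Gamma(n-m+1)\,\Gamma(n-m+2)}.
$$
Combining these gives exactly $\widetilde{L}_{m,n}$ as stated.

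I expect the step requiring the most care to be the bookkeeping in the translation $y_i=x_i-x$: keeping track of the $m-1$ extra factors of $e^{-x/2}$, verifying that the $(x_i-x)^2$ factors of the squared Vandermonde combine cleanly with $x_i^{n-m}$ to produce the correct exponent $n-m+2=(n+1)-(m-1)$ needed to match $\widetilde{K}_{m-1,n+1}^{-1}$, and confirming that the inequality $(y_i+x)^{n-m}\geq y_i^{n-m}$ goes in the right direction (which requires $n\geq m$, already assumed). Everything else is a transcription of the real-case argument with exponents doubled, as the author suggests.
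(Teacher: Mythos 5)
Your proposal is correct and is exactly the argument the paper intends: the authors omit the proof of this lemma, stating it follows the real-case proof of Lemma 3.3, and your transcription (pulling out $e^{-x/2}x^{n-m}$, bounding $(x_i-x)^2\leq x_i^2$ and enlarging the region for the upper bound, translating $y_i=x_i-x$ and using $(y_i+x)^{n-m}\geq y_i^{n-m}$ for the lower bound, then identifying the remaining integral as $\widetilde{K}_{m-1,n+1}^{-1}$) is sound, and your evaluation of $\widetilde{K}_{m,n}/\widetilde{K}_{m-1,n+1}$ correctly yields $\widetilde{L}_{m,n}$. The only cosmetic difference is that the paper cites Edelman for the real-case upper bound whereas you prove it directly, which is if anything more self-contained.
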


\section{The upper bounds for the distribution tails}

In this section, we will derive the upper bounds for the tails of 
the condition number distributions of random rectangular matrices
whose elements are independent and identically distributed 
standard normal random variables. Our main results are Theorem 4.5
for real random matrices, and Theorem 4.6 for complex random matrices.

\begin{lemma} \label{Real case: prob inequality A}
For any $A>0$, $x> 0$, and $n \geq m \geq 2 $, the largest eigenvalue $\lambda_{max}$ 
and the smallest eigenvalue $\lambda_{min}$ of $W_{m,n}$ satisfy
\begin{eqnarray*}
P \left( \frac{\lambda_{max}}{\lambda_{min}} > x^2, \lambda_{min} \leq \frac{A^2 n}{x^2} \right) 
< \frac{1}{\Gamma(n-m+2)}\left( \frac{An}{x} \right)^{n-m+1}. 
\end{eqnarray*}
\end{lemma}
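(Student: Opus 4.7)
The plan is to drop the first condition and bound by the marginal, i.e.\ observe that
$$
P\!\left(\tfrac{\lambda_{max}}{\lambda_{min}}>x^{2},\ \lambda_{min}\leq\tfrac{A^{2}n}{x^{2}}\right)\;\leq\;P\!\left(\lambda_{min}\leq\tfrac{A^{2}n}{x^{2}}\right)\;=\;\int_{0}^{A^{2}n/x^{2}} f_{\lambda_{min}}(y)\,dy.
$$
This reduces everything to a one-dimensional estimate on the smallest-eigenvalue density, for which Lemma~3.3 supplies the upper bound $f_{\lambda_{min}}(y)\leq L_{m,n}\,e^{-y/2}\,y^{(n-m-1)/2}$.

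Next I would throw away the exponential factor (using $e^{-y/2}<1$ for $y>0$, which gives the strict inequality the statement requires) and integrate the remaining power of $y$ explicitly. Writing $T=A^{2}n/x^{2}$, this yields
$$
\int_{0}^{T} f_{\lambda_{min}}(y)\,dy \;<\; L_{m,n}\int_{0}^{T} y^{(n-m-1)/2}\,dy \;=\; L_{m,n}\cdot\frac{2}{n-m+1}\,T^{(n-m+1)/2}.
$$
Plugging in $T^{(n-m+1)/2}=A^{n-m+1}n^{(n-m+1)/2}/x^{n-m+1}$ and using $(n-m+1)\Gamma(n-m+1)=\Gamma(n-m+2)$, the bound simplifies to
$$
\frac{2^{(n-m+1)/2}\,\Gamma\!\left(\tfrac{n+1}{2}\right)}{\Gamma(m/2)\,\Gamma(n-m+2)}\cdot\frac{A^{n-m+1}n^{(n-m+1)/2}}{x^{n-m+1}}.
$$

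The remaining task, and the only real work, is to show this quantity is at most $\frac{1}{\Gamma(n-m+2)}(An/x)^{n-m+1}$, which after cancellation is the Gamma-function inequality
$$
\Gamma\!\left(\tfrac{n+1}{2}\right)\;\leq\;\Gamma(m/2)\,(n/2)^{(n-m+1)/2}.
$$
I would verify this by expanding the ratio $\Gamma((n+1)/2)/\Gamma(m/2)$ as a telescoping product. When $n-m+1$ is even, set $k=(n-m+1)/2$; the ratio becomes $(m/2)(m/2+1)\cdots(m/2+k-1)$, and each factor is bounded by $m/2+k-1=(n-1)/2<n/2$, yielding the claim. When $n-m+1$ is odd, I peel off one half-integer step using Lemma~2.6 ($\Gamma(m/2+1/2)<\Gamma(m/2)\sqrt{m/2}\leq\Gamma(m/2)\sqrt{n/2}$) and handle the remaining integer-step product the same way.

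The main obstacle is the last Gamma-function inequality, but the telescoping-product argument combined with Lemma~2.6 handles both parities uniformly, so no deeper input (such as Stirling) is needed. Every other step is a direct application of Lemma~3.3 or elementary calculus.
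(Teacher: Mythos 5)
Your proposal is correct and follows essentially the same route as the paper's proof: bound by the marginal of $\lambda_{min}$, apply the density upper bound from Lemma~3.3, drop the exponential, integrate, and reduce everything to the inequality $\Gamma\left(\frac{n+1}{2}\right) \leq \Gamma\left(\frac{m}{2}\right)\left(\frac{n}{2}\right)^{\frac{n-m+1}{2}}$, which the paper likewise obtains from (2.6). Your telescoping-product treatment of that final Gamma inequality is just a more explicit version of the paper's ``apply (2.6) repeatedly.''
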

\begin{proof}
From the upper bound for the probability density function of $\lambda_{min}$ in Lemma 3.2, we have
\begin{eqnarray*}
P \left( \frac{\lambda_{max}}{\lambda_{min}} > x^2, \lambda_{min} \leq \frac{A^2 n}{x^2} \right) 
& < & P \left(\lambda_{min} \leq \frac{A^2 n}{x^2} \right) \\
& = & \int_0^{\frac{A^2 n}{x^2}} f_{\lambda_{min}}(t) d t  \\
& < &  L_{m,n}\int_0^{\frac{A^2 n}{x^2}} t^{\frac{1}{2}(n-m-1)} d t\\
& = &   \frac{\Gamma \left( \frac{n+1}{2}\right)   }
             { \Gamma \left( \frac{m}{2}\right) \left(\frac{n}{2}\right)^{\frac{n-m+1}{2}}} 
        \frac{1}{\Gamma(n-m+2)}\left( \frac{An}{x} \right)^{n-m+1}.
\end{eqnarray*}

Since $m \leq n$, by applying (2.6) repeatedly, we can prove
\begin{eqnarray*}
\Gamma \left( \frac{m}{2}\right) \left(\frac{n}{2}\right)^{\frac{n-m+1}{2}}
& > & \Gamma \left( \frac{n+1}{2}\right). 
\end{eqnarray*}
Therefore, we have
\begin{eqnarray*}
P \left( \frac{\lambda_{max}}{\lambda_{min}} > x^2, \lambda_{min} \leq \frac{A^2 n}{x^2} \right) 
& < &   \frac{1}{\Gamma(n-m+2)}\left( \frac{An}{x} \right)^{n-m+1}.
\end{eqnarray*}
\qquad
\end{proof}

Similar to real random matrices, for complex random matrices, we have
the following Lemma 4.2. Lemma 4.2 can be proved using the 
same techniques as Lemma 4.1, so we 
will omit the proof and only give the result. 
\begin{lemma} \label{Complex case: prob inequality A}
For any $A > 0$, $ x > 0$, and $n \geq m \geq 2 $,  the largest eigenvalue $\widetilde{\lambda}_{max}$ 
and the smallest eigenvalue $\widetilde{\lambda}_{min}$ of 
$\widetilde{W}_{m,n}$ satisfy
\begin{eqnarray*}
 P \left( \frac{\widetilde{\lambda}_{max}}{\widetilde{\lambda}_{min}} > x^2,
\widetilde{\lambda}_{min} \leq \frac{A^2 n}{x^2} \right) 
< \frac{1}{\Gamma(n-m+2)^2}\left( \frac{A^2n^2}{2x^2} \right)^{n-m+1}. 
\end{eqnarray*}
\end{lemma}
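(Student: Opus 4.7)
The plan is to follow the proof of Lemma 4.1 line for line, substituting the complex-case density bound of Lemma 3.4 for the real-case density bound of Lemma 3.2. First I would discard the largest-eigenvalue condition to get
$$
P\!\left(\frac{\widetilde{\lambda}_{max}}{\widetilde{\lambda}_{min}} > x^2,\; \widetilde{\lambda}_{min} \leq \frac{A^2 n}{x^2}\right)
< P\!\left(\widetilde{\lambda}_{min} \leq \frac{A^2 n}{x^2}\right)
= \int_0^{A^2 n/x^2} \widetilde{f}_{\widetilde{\lambda}_{min}}(t)\, dt,
$$
and then insert the upper bound $\widetilde{f}_{\widetilde{\lambda}_{min}}(t) \leq \widetilde{L}_{m,n}\, e^{-t/2}\, t^{n-m} \leq \widetilde{L}_{m,n}\, t^{n-m}$ from Lemma 3.4 and integrate to obtain
$$
P\!\left(\widetilde{\lambda}_{min} \leq \frac{A^2 n}{x^2}\right)
\leq \frac{\widetilde{L}_{m,n}}{n-m+1}\left(\frac{A^2 n}{x^2}\right)^{n-m+1}.
$$

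Next I would simplify the leading constant. Using the explicit form of $\widetilde{L}_{m,n}$ from (3.16) together with the identity $(n-m+1)\Gamma(n-m+1) = \Gamma(n-m+2)$, the prefactor collapses to
$$
\frac{\widetilde{L}_{m,n}}{n-m+1} = \frac{\Gamma(n+1)}{2^{n-m+1}\,\Gamma(m)\,\Gamma(n-m+2)^2}.
$$
The remaining task is to absorb the ratio $\Gamma(n+1)/\Gamma(m)$ into a power of $n$. This is the analog of the Gamma-function inequality used at the end of the proof of Lemma 4.1, but it is actually much cleaner in the complex case because $n$ and $m$ are integers: $\Gamma(n+1)/\Gamma(m) = n(n-1)\cdots m$ is a product of exactly $n-m+1$ integers, each at most $n$, so $\Gamma(n+1)/\Gamma(m) \leq n^{n-m+1}$. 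Multiplying this extra factor of $n^{n-m+1}$ into $\bigl(\tfrac{A^2 n}{2x^2}\bigr)^{n-m+1}$ yields $\bigl(\tfrac{A^2 n^2}{2x^2}\bigr)^{n-m+1}$, which gives the desired bound.

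There is no genuine obstacle here; the only step that differs substantively from Lemma 4.1 is the Gamma-ratio estimate, and because all Gamma arguments are integers (as opposed to half-integers in the real case), no analog of Lemma 2.3's bound $\Gamma(x+\tfrac{1}{2})<\Gamma(x)\sqrt{x}$ is needed—an elementary product estimate suffices. Thus the proof is essentially a bookkeeping exercise with the complex-case constants $\widetilde{L}_{m,n}$ in place of $L_{m,n}$.
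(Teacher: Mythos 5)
Your proof is correct and follows essentially the same route the paper indicates for Lemma 4.2 (drop the $\widetilde{\lambda}_{max}$ condition, bound the density of $\widetilde{\lambda}_{min}$ via Lemma 3.4 with $e^{-t/2}\le 1$, integrate, and absorb the Gamma ratio into $n^{n-m+1}$). Your observation that the Gamma-ratio step reduces to the elementary integer product bound $\Gamma(n+1)/\Gamma(m)=m(m+1)\cdots n\le n^{n-m+1}$, rather than the repeated application of inequality (2.6) needed in the real case, is accurate and is the only place the two cases differ.
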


The proof of the following Lemma 4.3 is based on 
the upper bound for the joint probability density 
function of $\lambda_{max}$ and $\lambda_{min}$ 
in Lemma 3.1 and the upper bound of the incomplete
Gamma function in Lemma 2.6.

\begin{lemma} \label{Real case: prob inequality B}
For any $A \geq 2.32$, $x>0$, and $n \geq m \geq 2 $, the largest eigenvalue $\lambda_{max}$ 
and the smallest eigenvalue $\lambda_{min}$ of  $W_{m,n}$  satisfy
\begin{eqnarray*}
P \left(\frac{\lambda_{max}}{\lambda_{min}} > x^2, \lambda_{min} > \frac{A^2 n}{x^2} \right)
 < 0.017 \frac{1}{\Gamma(n-m+2)} \left( \frac{A n}{x} \right)^{n-m+1}.  
\end{eqnarray*}
\end{lemma}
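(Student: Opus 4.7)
The plan is to mirror Lemma 4.1's strategy but in the complementary regime $\lambda_{min} > A^2 n/x^2$: here $\lambda_{min}$ is not small, so the decay must come from the upper tail of $\lambda_{max}$, which is forced to exceed $x^2 \lambda_{min} > A^2 n$. I would write the probability as the iterated integral
\[
P = \int_{A^2 n/x^2}^{\infty} \int_{x^2 v}^{\infty} f_{\lambda_{max}, \lambda_{min}}(u, v)\, du\, dv
\]
and apply Lemma 3.1 to replace the joint density by $C_{m,n}\, e^{-(u+v)/2}\, u^{(n+m-3)/2}\, v^{(n-m-1)/2}$. This turns the problem into a product-structured double integral to which the incomplete-Gamma bound of Lemma 2.6 can be applied twice.

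The inner $u$-integral is bounded via Lemma 2.6 with $a = 1/2$, $b = (n+m-3)/2$, and $t = x^2 v \geq A^2 n$, producing $k\, e^{-x^2 v/2}\, (x^2 v)^{(n+m-3)/2}$. The admissibility condition $t \geq k b/(ka-1)$ reduces, in the worst case $m = n$ as $n \to \infty$, to $A^2(k-2) \geq 2k$; the threshold $A \geq 2.32$ is chosen precisely so that this holds with an absolute constant $k \approx 3.2$ uniformly in $m, n$. Collecting powers of $v$, the outer integral becomes $\int_{A^2 n/x^2}^{\infty} e^{-v(1+x^2)/2}\, v^{n-2}\, dv$, to which I apply Lemma 2.6 a second time (after the substitution $w = v(1+x^2)/2$) with $a = 1$, $b = n-2$, and $t = A^2 n(1+x^2)/(2x^2) \geq A^2 n/2$; admissibility again follows from $A \geq 2.32$ with some constant $k' \approx A^2/(A^2 - 2)$. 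Undoing the change of variable and cancelling powers, the surviving $x$-dependence condenses to $\frac{x^2}{1+x^2}\, e^{-A^2 n/(2x^2)}$, which is monotonically decreasing in $1/x^2$ and therefore bounded by $1$ uniformly in $x > 0$; this strips off the factor $x^{-(n-m+1)}$ matching the target rate.

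What remains is a purely numerical inequality of the form
\[
2\, k\, k'\, C_{m,n}\, (A^2 n)^{n-2}\, e^{-A^2 n/2}
\leq 0.017\, \frac{(An)^{n-m+1}}{\Gamma(n-m+2)},
\]
to be verified uniformly over $2 \leq m \leq n$. Substituting $C_{m,n} = 1/(4\Gamma(m-1)\Gamma(n-m+1))$ and applying Stirling's bounds from Lemma 2.7 to $\Gamma(m-1)$, the dominant behaviour is captured by $\rho := A^2\, e^{1 - A^2/2}$, and the condition $A \geq 2.32$ is precisely what forces $\rho \leq 1$. The main obstacle is this numerical bookkeeping: both applications of Lemma 2.6 must use $k$ and $k'$ near their minimum admissible values, and the diagonal case $m = n$ (where $\Gamma(n-m+1) = 1$ offers no extra decay in the denominator) requires careful Stirling control against the near-critical factor $\rho^n$.
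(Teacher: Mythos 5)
Your proposal follows essentially the same route as the paper's proof: bound the joint density via Lemma 3.1, apply the incomplete-Gamma tail bound of Lemma 2.6 twice (the paper uses $k=4$ both times and substitutes $u=tx^2$ before dropping the $e^{-u/(2x^2)}$ factor, whereas you keep the $(1+x^2)$ factor and use near-minimal $k,k'$, which only makes the final constant smaller), and close with the Stirling estimate $n^{m-2}/\Gamma(m-1)<e^{n}/\sqrt{4\pi}$ together with $A^{2}e^{1-A^{2}/2}\leq 1$ at $A=2.32$. The argument is correct and the remaining numerical verification goes through exactly as you outline.
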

 
\begin{proof}
From the upper bound for the joint probability density function of $\lambda_{max}$
and $\lambda_{min}$ in Lemma 3.1, we have
\begin{eqnarray*}
P \left(\frac{\lambda_{max}}{\lambda_{min}} > x^2, \lambda_{min} > \frac{A^2 n}{x^2} \right) 
&   =   & \int_{\frac{A^2 n}{x^2}}^\infty \int_{t x^2}^\infty  
               f_{\lambda_{max},\lambda_{min}}(s, t) d s d t \\
&   <   & \int_{\frac{A^2 n}{x^2}}^\infty \int_{t x^2}^\infty  
          C_{m,n}  e^{-\frac{1}{2}t } t^{\frac{1}{2}(n-m-1)} 
            e^{-\frac{1}{2}s } s^{\frac{1}{2}(n+m-3)}    d s d t.  
\end{eqnarray*}
Taking the transform $u = t x^2$, we have
\begin{eqnarray*}
P \left(\frac{\lambda_{max}}{\lambda_{min}} > x^2, \lambda_{min} > \frac{A^2 n}{x^2} \right)
&   =   & C_{m,n}  \left( \frac{1}{x} \right)^{n-m+1} 
          \int_{A^2 n}^\infty  e^{-\frac{u}{2 x^2}} u^{\frac{1}{2}(n-m-1)} \\ & & \qquad 
          \left( \int_u^\infty e^{-\frac{1}{2}s } s^{\frac{1}{2}(n+m-3)} ds \right) d u.
\end{eqnarray*}
According to Lemma 2.6, with $k=4$,if $u \geq 2(n+m-3)$, then
\begin{eqnarray*}
\int_u^\infty e^{-\frac{1}{2}s } s^{\frac{1}{2}(n+m-3)} ds
& \leq & 4 e^{-\frac{1}{2}u } u^{\frac{1}{2}(n+m-3)}.
\end{eqnarray*}
Since $A \geq 2.32$ and $n \geq m$, hence, $u \geq A^2n \geq 2(n+m-3)$. Therefore, we have
\begin{eqnarray*} 
P \left(\frac{\lambda_{max}}{\lambda_{min}} > x^2, \lambda_{min} > \frac{A^2 n}{x^2} \right)
& \leq &  4 C_{m,n} \left( \frac{1}{x} \right)^{n-m+1}  
          \int_{A^2 n}^\infty  e^{-\frac{u}{2 x^2}-\frac{1}{2}u} u^{n-2} d u \\
& \leq &  4 C_{m,n} \left( \frac{1}{x} \right)^{n-m+1}  
          \int_{A^2 n}^\infty  e^{-\frac{1}{2}u} u^{n-2} d u. 
\end{eqnarray*}
Since $A \geq 2.32$, so $A^2 n \geq 4(n-2)$. Apply Lemma 2.6 again,  we have
\begin{eqnarray}
P \left(\frac{\lambda_{max}}{\lambda_{min}} > x^2, \lambda_{min} > \frac{A^2 n}{x^2} \right)
& \leq &  16 C_{m,n}
          e^{-\frac{1}{2}A^2 n} A^{2n-4} n^{n-2}  \left( \frac{1}{x} \right)^{n-m+1}  \nonumber \\
&   =   &\frac{4 e^{-\frac{A^2 n}{2} } A^{2n-4} n^{m-3} }{\Gamma(m-1)\Gamma(n-m+1) } 
                       \left( \frac{n}{x} \right)^{n-m+1}  \nonumber \\
&\leq   &\frac{4 e^{(2 \ln A-\frac{A^2}{2})n }}{A^4} \frac{n^{m-2} }{\Gamma(m-1) } 
                      \frac{1}{\Gamma(n-m+2)} \left( \frac{n}{x} \right)^{n-m+1}.    
\end{eqnarray}
Note that, for any $2 \leq m \leq n$, it can be proved that 
\begin{eqnarray}
\frac{n^{m-2}}{\Gamma(m-1) }
&   <   & \frac {e^{n}}{\sqrt{4 \pi}}.  
\end{eqnarray}
Substitute (4.2) into (4.1), we have
\begin{eqnarray*}
P \left(\frac{\lambda_{max}}{\lambda_{min}} > x^2, \lambda_{min} > \frac{A^2 n}{x^2} \right)
& \leq & \frac{4 e^{(2 \ln A-\frac{A^2}{2} +1 )n }}{\sqrt{4 \pi} A^4}
                      \frac{1}{\Gamma(n-m+2)} \left( \frac{n}{x} \right)^{n-m+1}.   
\end{eqnarray*}
Since $A \geq 2.32 $, therefore, we have 
\begin{eqnarray*}
e^{(2 \ln A-\frac{A^2}{2} +1 )n } < 1.
\end{eqnarray*}
Therefore, when $A \geq 2.32 $, we have
\begin{eqnarray*}
P \left(\frac{\lambda_{max}}{\lambda_{min}} > x^2, \lambda_{min} > \frac{A^2 n}{x^2} \right)
& \leq & \frac{4}{\sqrt{4 \pi} A^4} \frac{1}{\Gamma(n-m+2)} \left( \frac{n}{x} \right)^{n-m+1}  \\
& \leq & 0.017 \frac{1}{\Gamma(n-m+2)} \left( \frac{An}{x} \right)^{n-m+1}. 
\end{eqnarray*}
\qquad
\end{proof}

Similar to real random matrices, for complex random matrices, we have
the following Lemma 4.4. Lemma 4.4 can be proved using the 
same techniques as Lemma 4.3, so we 
will omit the proof and only give the result.
 
\begin{lemma} \label{Complex case: prob inequality B}
For any $A \geq 3.2735$, $x>0$, and $n \geq m \geq 2 $, 
the largest eigenvalue $\widetilde{\lambda}_{max}$ 
and the smallest eigenvalue $\widetilde{\lambda}_{min}$ of  $\widetilde{W}_{m,n}$ satisfy
\begin{eqnarray*}
P \left( \frac{\widetilde{\lambda}_{max}}{\widetilde{\lambda}_{min}} > x^2,
\widetilde{\lambda}_{min} > \frac{A^2 n}{x^2} \right) 
<  0.0016 \frac{1}{\Gamma(n-m+2)^2} \left( \frac{A^2 n^2}{2x} \right)^{n-m+1}.  
\end{eqnarray*}
\end{lemma}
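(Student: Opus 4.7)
The plan is to mirror the proof of Lemma 4.3, with the real density bounds replaced by their complex counterparts. I begin by expressing
\[
P\!\left(\frac{\widetilde{\lambda}_{max}}{\widetilde{\lambda}_{min}}>x^2,\ \widetilde{\lambda}_{min}>\frac{A^2n}{x^2}\right) = \int_{A^2n/x^2}^{\infty}\!\!\int_{tx^2}^{\infty} \widetilde{f}_{\widetilde{\lambda}_{max},\widetilde{\lambda}_{min}}(s,t)\,ds\,dt,
\]
apply the upper bound on the joint density from Lemma 3.2, and substitute $u=tx^2$. This extracts the factor $(1/x)^{2(n-m+1)}$ and reduces the problem to estimating the iterated integral of $e^{-u/(2x^2)}\,u^{n-m}$ against $\int_u^\infty e^{-s/2} s^{n+m-2}\,ds$ on $u\in[A^2n,\infty)$.

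Next I apply Lemma 2.6 twice. First, to the inner integral with $a=1/2$, $b=n+m-2$, $k=4$: the hypothesis $u \geq 4(n+m-2)$ is ensured by $u \geq A^2 n$, $m \leq n$, and $A \geq 3.2735 > 2\sqrt{2}$. Dropping the harmless factor $e^{-u/(2x^2)}$, I apply Lemma 2.6 a second time (now with $b=2n-2$, again $k=4$), whose hypothesis reduces to $A^2 \geq 8-8/n$. Combining the two bounds yields
\[
P \leq 16\,\widetilde{C}_{m,n}\,e^{-A^2n/2}\,A^{4n-4}\,n^{2n-2}\left(\frac{1}{x}\right)^{2(n-m+1)}.
\]
Plugging in $\widetilde{C}_{m,n}$ from Lemma 3.2, combining $\Gamma(n-m+1)\Gamma(n-m+2) = \Gamma(n-m+2)^2/(n-m+1)$, and factoring out $\left(\frac{A^2n^2}{2x^2}\right)^{n-m+1}/\Gamma(n-m+2)^2$, the remaining work is to control the prefactor
\[
\frac{16\,(n-m+1)\,A^{2(n+m)-6}\,n^{2m-4}\,e^{-A^2n/2}}{2^{n+m-1}\,\Gamma(m-1)\,\Gamma(m)}.
\]

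The main obstacle is a uniform-in-$(m,n)$ bound on this prefactor, analogous to estimate (4.2) in the real case. I would invoke the Stirling bounds from Lemma 2.7 to show that $n^{2m-3}/(\Gamma(m-1)\Gamma(m))$ is of order $e^{2(n-1)}/n$ uniformly for $2 \leq m \leq n$ (with the worst case near $m=n$), reducing the problem to the one-variable inequality
\[
4\ln A - 2\ln 2 + 2 - \frac{A^2}{2} \leq 0.
\]
This inequality becomes tight precisely near $A = 3.2735$, which explains the threshold in the statement; for any $A \geq 3.2735$ all $n$-dependent exponentials are bounded by $1$, and the residual absolute constants (aided by a spare $A^{-(n-m+1)}$ pulled out of $A^{2(n+m)-6}$, mirroring the step $A^{2n-4}\to A^{-4}\cdot A^{n-m+1}\cdots$ in the real proof) combine to yield the stated bound $0.0016$.
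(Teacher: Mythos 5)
Your route is exactly the one the paper intends (the paper omits this proof, saying only that it follows the technique of Lemma 4.3): the same decomposition, the substitution $u=tx^2$, two applications of Lemma 2.6 with $k=4$ (whose hypotheses you verify correctly via $A^2\ge 8-8/n$), and the identification of the threshold $A\ge 3.2735$ as the root of $4\ln A-2\ln 2+2-\frac{A^2}{2}\le 0$ — that last computation is the heart of the matter and you have it right (the left side equals about $-0.0008$ at $A=3.2735$ and is positive at $A=3.27$). Your factorization of the remaining prefactor is also correct. (Note that your derivation produces $\left(\frac{A^2n^2}{2x^2}\right)^{n-m+1}$, consistent with Lemma 4.2 and Theorem 4.6; the $\left(\frac{A^2n^2}{2x}\right)^{n-m+1}$ in the statement is evidently a typo.)

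Two quantitative points in your endgame need repair. First, the intermediate bound $n^{2m-3}/\bigl(\Gamma(m-1)\Gamma(m)\bigr)\lesssim e^{2(n-1)}/n$ is false as an upper bound: Stirling gives the sharp uniform constant $e^{2n}/(2\pi n)$, and since $e^2>2\pi$ your version already fails at $m=n=10$ (left side $\approx 6.83\times 10^6$ versus $e^{18}/10\approx 6.57\times 10^6$). Second, even with the corrected constant, the crude chain you describe bounds the prefactor by $\frac{32(n-m+1)}{2\pi n^2A^6}\,e^{n(4\ln A-2\ln 2+2-A^2/2)}\le\frac{32}{2\pi nA^6}$, which at $n=2$ is about $0.0021>0.0016$. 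To actually reach $0.0016$ you must squeeze a little more: for instance, observe that the prefactor $F(m,n)$ is increasing in $m$ for fixed $n$ (the ratio $F(m+1,n)/F(m,n)=\frac{n-m}{n-m+1}\cdot\frac{A^2n^2}{2m(m-1)}>1$ since $A^2/2>5$), so its maximum occurs at $m=n$, where direct evaluation gives $F(2,2)=2A^2e^{-A^2}\approx 4.8\times 10^{-4}$ and $F(n,n)$ decreases in $n$; alternatively, handle $n=2$ separately. These are bookkeeping repairs rather than conceptual gaps — the argument itself is sound and the lemma holds with room to spare.
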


We are now prepared to prove our first
main result about the condition numbers 
of real random matrices whose elements are 
independent and identically distributed standard 
normal random variables.

\begin{theorem} \label{ Real Case: new upper bounds for tail }
For any $n \geq m \geq 2$ and $x \geq n-m+1$, the 2-norm 
condition number of $G_{m \times n}$ satisfies
\begin{eqnarray}
P\left(\frac{\kappa_2(G_{m \times n}) } {{n}/{(n-m+1)}}  > x\right) < 
\frac{1}{\sqrt{2\pi}} \left( \frac{C}{x} \right)^{n-m+1},
\end{eqnarray}
where $C \leq 6.414$ is a universal positive constant independent 
of $m$, $n$, and $x$.
\end{theorem}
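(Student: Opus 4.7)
The plan is to combine Lemmas 4.1 and 4.3, which already give the two halves of the distribution cut at a threshold on $\lambda_{\min}$, and then convert the resulting bound into the required form using Stirling's formula from Lemma 2.7.

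Since $\kappa_2(G_{m\times n}) = \sqrt{\lambda_{\max}/\lambda_{\min}}$ by Proposition 2.1, the event $\{\kappa_2(G_{m\times n})/(n/(n-m+1)) > x\}$ is exactly $\{\lambda_{\max}/\lambda_{\min} > y^2\}$ where $y = nx/(n-m+1)$. I would split this event according to whether $\lambda_{\min} \le A^2 n/y^2$ or $\lambda_{\min} > A^2 n/y^2$, for a constant $A \ge 2.32$ to be chosen at the end. Applying Lemma 4.1 with $x$ replaced by $y$ to the first piece and Lemma 4.3 to the second piece and adding, I get
\begin{eqnarray*}
P\left(\frac{\lambda_{\max}}{\lambda_{\min}} > y^2\right)
< 1.017 \cdot \frac{1}{\Gamma(n-m+2)} \left(\frac{An}{y}\right)^{n-m+1}.
\end{eqnarray*}
Substituting $y = nx/(n-m+1)$ turns $An/y$ into $A(n-m+1)/x$, so the bound becomes
\begin{eqnarray*}
1.017 \cdot \frac{(n-m+1)^{n-m+1}}{\Gamma(n-m+2)} \left(\frac{A}{x}\right)^{n-m+1}.
\end{eqnarray*}

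Next I would apply the lower half of Lemma 2.7 (Stirling): with $k = n-m+1$, $\Gamma(k+1) > \sqrt{2\pi}\, k^{k+1/2} e^{-k}$, giving $(n-m+1)^{n-m+1}/\Gamma(n-m+2) < e^{n-m+1}/\sqrt{2\pi(n-m+1)}$. Inserting this produces
\begin{eqnarray*}
P\left(\frac{\kappa_2(G_{m\times n})}{n/(n-m+1)} > x\right)
< \frac{1.017}{\sqrt{2\pi(n-m+1)}} \left(\frac{Ae}{x}\right)^{n-m+1}.
\end{eqnarray*}
The worst case is $n-m+1=1$, where the $1/\sqrt{n-m+1}$ factor provides no additional decay; for $n-m+1 \ge 2$ the factor $1.017/\sqrt{n-m+1} \le 1$ is already absorbed into $1/\sqrt{2\pi}$.

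Finally, to meet the advertised constant I would pick $A = 2.32$, the smallest value allowed by Lemma 4.3, and then check that $1.017 \cdot Ae \le 6.414$ so that the bound is of the claimed form $\frac{1}{\sqrt{2\pi}}(C/x)^{n-m+1}$ with $C \le 6.414$. The only delicate step is the bookkeeping with the $1.017$ and the $\sqrt{n-m+1}$ factor in the $n-m+1=1$ case — everything else is a direct combination of earlier lemmas plus one Stirling estimate; no new technical machinery should be needed.
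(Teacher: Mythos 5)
Your proposal is correct and follows essentially the same route as the paper's own proof: the same split of the event at $\lambda_{\min}\lessgtr A^2n/(\cdot)^2$ with $A=2.32$, the same combination of Lemmas 4.1 and 4.3, and the same Stirling estimate from Lemma 2.7; the only cosmetic differences are that you rescale $x\mapsto nx/(n-m+1)$ at the outset rather than at the end, and you keep the factor $1.017$ outside the $(n-m+1)$-th power instead of absorbing it into the base. Your final bookkeeping ($1.017\cdot 2.32\cdot e\approx 6.4136\le 6.414$, with the $1/\sqrt{n-m+1}$ factor handled separately for $n-m+1\ge 2$) is sound.
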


\begin{proof}
For any $L > 0$, inspired by \cite{Azais03},
we first break down $P( \kappa_2(G_{m \times n}) > x )$ into two parts.
\begin{eqnarray*}
P( \kappa_2(G_{m \times n}) > x ) 
&  =  &  P \left( \frac{\lambda_{max}}{\lambda_{min}} > x^2 \right)   \\
&  =  &  P \left(\frac{\lambda_{max}}{\lambda_{min}} > x^2, \lambda_{min} \leq \frac{L^2 n}{x^2} \right)
       + P \left(\frac{\lambda_{max}}{\lambda_{min}} > x^2, \lambda_{min} > \frac{L^2 n}{x^2} \right).
\end{eqnarray*}
Let $L = 2.32$, then based on Lemma 4.1 and Lemma 4.3, we can get
\begin{eqnarray*}
P( \kappa_2(G_{m \times n}) > x ) 
&  <  &  \frac{1}{ \Gamma(n-m+2) } \left( \frac{ L  n}{x} \right)^{n-m+1} \\ & & \qquad
         + 0.017 \frac{1}{ \Gamma(n-m+2) } \left( \frac{ L  n}{x} \right)^{n-m+1}  \\
&  <  &  \frac{1}{ \Gamma(n-m+2) } \left( \frac{ 1.017 L  n}{x} \right)^{n-m+1}.  
\end{eqnarray*}
Note that, from Lemma 2.7, we have
\begin{eqnarray*}
\Gamma(n-m+2) > \sqrt{2\pi (n-m+1)} (n-m+1)^{n-m+1} e^{-(n-m+1)}.
\end{eqnarray*}
Therefore, we have
\begin{eqnarray*}
P( \kappa_2(G_{m \times n}) > x ) 
&  <  & \frac{1}{\sqrt{2\pi (n-m+1)}}\left( \frac{ 1.017eL \frac{n}{n-m+1} }{x} \right)^{n-m+1}.  
\end{eqnarray*}
Therefore
\begin{eqnarray*}
P\left(\frac{\kappa_2(G_{m \times n}) } {{n}/{(n-m+1)}}  > x\right)
&  <  & \frac{1}{\sqrt{2\pi (n-m+1)}}\left( \frac{ 1.017eL }{x} \right)^{n-m+1}  \\
&  <  & \frac{1}{\sqrt{2\pi}} \left( \frac{ 6.414 }{x} \right)^{n-m+1}. 
\end{eqnarray*}
Let $C = 6.414$, then we get (4.3).
\qquad
\end{proof}

Remark:
\begin{remunerate}

\item The upper bound in Theorem 4.5 is for arbitrary $n \geq m \geq 2$ 
and $x \geq n-m+1$. 
For some special case of $m$ and $n$, more precise upper bound can be obtained. 
For example, for the special case of real random $2 \times n$ matrices, 
based on the exact probability density function of $\kappa_2(G_{2 \times n})$ 
in \cite{Edelman89}, we can get
\begin{eqnarray*}
P\left(\kappa_2(G_{2 \times n}) > x \right) = \left( \frac{2x}{x^2+1} \right)^{n-1}
< \left( \frac{2}{x} \right)^{n-1}.
\end{eqnarray*}

\item For the special case of real random $m \times m$ matrices,
where $m \geq 3$, 
it has been proved in \cite{Azais03} that
\begin{eqnarray}
P \left(\kappa_2(G_{m \times m})   >  m \, . \, x  \right)
< \frac{C^\prime }{x},
\end{eqnarray}
where $C^\prime \leq 5.60$ is a universal positive constant independent 
of $x$ and $m$.
 
In Theorem 4.5, if we take $m = n$, then we have
\begin{eqnarray*}
P\left(\kappa_2(G_{m \times m})   > m \, . \, x \right)
< \frac{2.60 }{x},
\end{eqnarray*}
which is consistent with (4.4) except that we improved the upper 
bound for the constant $C^\prime$ from 5.60 to 2.60.
From the following (4.5), we know that the constant $C^\prime$
in (4.4) actually must at least be 2. 

\item For the special case of large real random $m \times m$ matrices,
it has been proved in \cite{Edelman89} that
\begin{eqnarray*}
\lim_{m \rightarrow \infty}   P\left( \frac{\kappa_2(G_{m \times m})} {m}  < x \right)
= e^{-\frac{2}{x} - \frac{2}{x^2} }.
\end{eqnarray*}
Therefore, we have
\begin{eqnarray}
\lim_{m \rightarrow \infty}   P\left( \frac{\kappa_2(G_{m \times m})} {m}  > x \right)
= 1- e^{-\frac{2}{x} - \frac{2}{x^2} }  \sim   \frac{2}{x}
\end{eqnarray}
as $x \rightarrow \infty$.
Hence, the smallest possible universal constant $C$ in Theorem 4.5 
must be no smaller than $2 \sqrt{2 \pi}$. Therefore,
the universal constant $C$ in Theorem 4.5  actually must satisfy
\begin{eqnarray}
C \geq 2 \sqrt{2 \pi} \approx 5.013.
\end{eqnarray}
\end{remunerate}

Similar to real random matrices, for complex random matrices, we have
the following Theorem 4.6. Theorem 4.6 can be proved using the 
same techniques as Theorem 4.5, so we 
will omit the proof and only give the result.

\begin{theorem} \label{ Complex Case: new upper bounds for tail }
For any $n \geq m \geq 2$ and $x \geq n-m+1$, the 2-norm condition
number of $\widetilde{G}_{m \times n}$
satisfies
\begin{eqnarray*}
P\left(\frac{\kappa_2(\widetilde{G}_{m \times n}) } {{n}/{(n-m+1)}}  > x\right) < 
\frac{1}{{2\pi}} \left( \frac{\widetilde{C} }{x} \right)^{2(n-m+1)},
\end{eqnarray*}
where $\widetilde{C}  \leq 6.298$ is a universal positive constant independent 
of $x, m$, and $n$.
\end{theorem}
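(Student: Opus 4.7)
The plan is to follow the same three-stage template used for Theorem 4.5, but with the complex-case lemmas substituted in. First, using Proposition 2.3, I rewrite
\[
P(\kappa_2(\widetilde{G}_{m \times n}) > x) = P\!\left(\frac{\widetilde{\lambda}_{max}}{\widetilde{\lambda}_{min}} > x^2\right),
\]
and split this by conditioning on whether $\widetilde{\lambda}_{min}$ is below or above the threshold $L^2 n/x^2$:
\[
P\!\left(\kappa^2 > x^2\right) = P\!\left(\kappa^2 > x^2,\; \widetilde{\lambda}_{min} \leq \tfrac{L^2 n}{x^2}\right) + P\!\left(\kappa^2 > x^2,\; \widetilde{\lambda}_{min} > \tfrac{L^2 n}{x^2}\right).
\]
I then choose $L = 3.2735$ so that Lemma 4.4 can be applied to the second summand; Lemma 4.2 bounds the first summand with the same $L$.

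Next I plug in both bounds. Each contributes a term of the form $\frac{1}{\Gamma(n-m+2)^2}\bigl(\frac{L^2 n^2}{2 x^2}\bigr)^{n-m+1}$, multiplied respectively by $1$ and by $0.0016$, so the sum is at most
\[
\frac{1.0016}{\Gamma(n-m+2)^2}\left(\frac{L^2 n^2}{2 x^2}\right)^{n-m+1}.
\]
I then replace $\Gamma(n-m+2)^2$ using the Stirling lower bound (2.5) of Lemma 2.7, giving
\[
\Gamma(n-m+2)^2 > 2\pi (n-m+1)\,(n-m+1)^{2(n-m+1)} e^{-2(n-m+1)},
\]
so that, after dividing $\kappa$ by $n/(n-m+1)$, the bound becomes
\[
P\!\left(\frac{\kappa_2(\widetilde{G}_{m\times n})}{n/(n-m+1)} > x\right) < \frac{1.0016}{2\pi(n-m+1)}\left(\frac{eL/\sqrt{2}}{x}\right)^{2(n-m+1)}.
\]
Using $n-m+1 \geq 1$ to absorb the $1/(n-m+1)$ into the prefactor $1/(2\pi)$, and observing that $eL/\sqrt{2} \approx 6.293$, the factor $1.0016$ may be absorbed (worst case $n-m+1=1$) to yield the constant $\widetilde{C} \leq 6.298$.

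The only real obstacle is bookkeeping on the constants: the $2(n-m+1)$ exponent makes the numerical margin tighter than in the real case, so $L$ must be chosen just large enough that Lemma 4.4's hypothesis $A \geq 3.2735$ holds while $eL/\sqrt{2}$ remains below the target $\widetilde{C} = 6.298$. Once $L = 3.2735$ is fixed, the rest is essentially mechanical, and the use of $x \geq n-m+1$ together with $n-m+1 \geq 1$ ensures all residual factors (including the $1.0016$ from the second region and the $1/\sqrt{n-m+1}$ from Stirling) can be absorbed without inflating $\widetilde{C}$ beyond $6.298$.
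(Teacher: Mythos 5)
Your proposal is correct and follows exactly the template the paper intends for Theorem 4.6 (the paper omits the proof, stating it uses the same techniques as Theorem 4.5): split on $\widetilde{\lambda}_{min}\lessgtr L^2n/x^2$ with $L=3.2735$, apply Lemmas 4.2 and 4.4, and use the Stirling lower bound on $\Gamma(n-m+2)^2$, with $eL/\sqrt{2}\approx 6.292$ and the $1.0016$ factor absorbed to give $\widetilde{C}\leq 6.298$. Note you have tacitly (and correctly) read the exponent base in Lemma 4.4 as $A^2n^2/(2x^2)$ rather than the $A^2n^2/(2x)$ printed in the paper, which is evidently a typo.
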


\section{The lower bounds for the distribution tails}

In this section, we will prove the lower bounds for the tails of 
the condition number distributions of random rectangular matrices
whose elements are independent and identically distributed 
standard normal random variables. Our main results are Theorem 5.5
for real random matrices, and Theorem 5.6 for complex random matrices.

\begin{lemma} \label{Real case: prob inequality C}
For any $ B > 0$, $x > 0 $, and $n \geq m \geq 2 $, 
the smallest eigenvalue $\lambda_{min}$ of $W_{m,n}$ satisfies
\begin{eqnarray*}
P \left(  \lambda_{min} \leq \frac{B^2 n}{x^2} \right) 
>  \sqrt{ \frac{2e^{\frac{5}{6}} } {3} } e^{-\frac{B^2 mn}{2 x^2}}
   \frac{1}{ \Gamma(n-m+2) } \left( \frac{e^{-\frac{1}{2}}B n}{x} \right)^{n-m+1}.  
\end{eqnarray*}
\end{lemma}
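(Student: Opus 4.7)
The plan is to start from the lower bound on the density $f_{\lambda_{min}}$ given in Lemma 3.2 and integrate it over the interval $[0, B^2 n / x^2]$. Specifically, I would write
\[
P\!\left(\lambda_{min} \leq \tfrac{B^2 n}{x^2}\right)
= \int_0^{B^2 n/x^2} f_{\lambda_{min}}(t)\,dt
\geq L_{m,n} \int_0^{B^2 n/x^2} e^{-\frac{m}{2}t}\, t^{\frac{1}{2}(n-m-1)}\,dt.
\]
Since $e^{-mt/2}$ is decreasing on the interval of integration, I would bound it below by its value at the right endpoint, $e^{-B^2 mn/(2x^2)}$, pull this factor out, and then evaluate the remaining pure-power integral exactly as $\frac{2}{n-m+1}\bigl(B^2 n/x^2\bigr)^{(n-m+1)/2}$.

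Next I would combine the constants: using $\Gamma(n-m+2) = (n-m+1)\,\Gamma(n-m+1)$, the prefactor becomes
\[
\frac{2 \, L_{m,n}}{n-m+1} \bigl(B^2 n/x^2\bigr)^{(n-m+1)/2}
= \frac{2^{(n-m+1)/2}\,\Gamma\!\left(\tfrac{n+1}{2}\right)}{\Gamma\!\left(\tfrac{m}{2}\right)\,\Gamma(n-m+2)} \cdot \frac{B^{n-m+1}\, n^{(n-m+1)/2}}{x^{n-m+1}}.
\]
To match the target form $\frac{1}{\Gamma(n-m+2)}\bigl(e^{-1/2} B n / x\bigr)^{n-m+1}$, I need to push the extra $n^{(n-m+1)/2}$ into a full $n^{n-m+1}$ by absorbing a $n^{(n-m+1)/2}$ from the Gamma-function ratio, producing the factor $e^{-(n-m+1)/2}$ and the residual constant $\sqrt{2 e^{5/6}/3}$.

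The main obstacle will be this last bookkeeping step: establishing a clean inequality of the form
\[
\frac{\Gamma\!\left(\tfrac{n+1}{2}\right)}{\Gamma\!\left(\tfrac{m}{2}\right)}
\geq \sqrt{\tfrac{2 e^{5/6}}{3}} \cdot \bigl(n/(2e)\bigr)^{(n-m+1)/2}
\]
uniformly in $n \geq m \geq 2$. I would prove this by applying Lemma 2.7: use the upper Stirling bound $\Gamma(m/2) < \sqrt{2\pi}\,(m/2-1)^{(m-1)/2} e^{-(m/2-1)+1/(6(m-2))}$ in the denominator and the lower Stirling bound $\Gamma((n+1)/2) > \sqrt{2\pi}\,((n-1)/2)^{n/2} e^{-(n-1)/2}$ in the numerator. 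After simplification the inequality reduces to a one-variable statement about the function $g(m,n) = \bigl(\tfrac{n-1}{m-2}\bigr)^{(m-1)/2}\bigl(\tfrac{n-1}{n}\bigr)^{(n-m+1)/2}$, which I would verify is monotone and achieves its worst case at the boundary $m=2$ (or $m=n$), giving the stated constant $\sqrt{2 e^{5/6}/3}$. The factor $e^{1/(6(m-2))}$ from Stirling is what produces the $e^{5/6}$ in the final constant after the worst-case analysis at small $m$.
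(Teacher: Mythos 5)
Your opening steps coincide exactly with the paper's proof: integrate the density lower bound of Lemma 3.3 over $[0,B^2n/x^2]$, replace $e^{-mt/2}$ by its value at the right endpoint, evaluate the power integral, and reduce everything to the single inequality
$\Gamma\!\left(\tfrac{n+1}{2}\right)\big/\Gamma\!\left(\tfrac{m}{2}\right) \geq \sqrt{2e^{5/6}/3}\,\bigl(n/(2e)\bigr)^{(n-m+1)/2}$.
That reduction is correct and is precisely where the paper also lands.

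The gap is in how you propose to prove that final inequality. You apply the Stirling bounds of Lemma 2.7 to $\Gamma(m/2)$ by writing it as $\Gamma\bigl((m/2-1)+1\bigr)$, i.e.\ with $x=m/2-1$. Lemma 2.7 requires $x>0$, so this is inapplicable at $m=2$ (where $x=0$), and the correction factor $e^{1/(6(m-2))}$ you carry along diverges as $m\to 2$; likewise your auxiliary function $g(m,n)=\bigl(\tfrac{n-1}{m-2}\bigr)^{(m-1)/2}\bigl(\tfrac{n-1}{n}\bigr)^{(n-m+1)/2}$ has $m-2$ in a denominator. Since you identify $m=2$ as the worst case, the route as stated cannot produce the finite constant $\sqrt{2e^{5/6}/3}$ there, and your explanation that ``the factor $e^{1/(6(m-2))}$ produces the $e^{5/6}$'' is not right. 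The paper avoids this by shifting the argument up by one: it applies Lemma 2.7 to $\tfrac{m}{2}\Gamma\!\left(\tfrac{m}{2}\right)=\Gamma\!\left(\tfrac{m}{2}+1\right)$ (so $x=m/2\geq 1$) and to $\tfrac{n+1}{2}\Gamma\!\left(\tfrac{n+1}{2}\right)$, giving a bounded correction $e^{1/(6m)}\leq e^{1/12}$; the constant then comes out as $e^{-1/12}\sqrt{e\,n/(n+1)}\geq\sqrt{2e^{5/6}/3}$, with the $2/3$ coming from $n/(n+1)$ at $n=2$ and the $e^{5/6}=e\cdot e^{-1/6}$ absorbing the squared Stirling correction. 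Your argument goes through once you make the same shift (or treat $m=2$ separately using $\Gamma(1)=1$), but as written the endgame fails exactly at the boundary case that determines the constant.
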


\begin{proof}
From the lower bound for the probability density function of
$\lambda_{min}$ in Lemma 3.3, we have
\begin{eqnarray*}
P \left(\lambda_m \leq \frac{B^2 n}{x^2} \right) 
&   =   & \int_0^{\frac{B^2 n}{x^2}}  f(\lambda_m)  d \lambda_m  \\
&   >   & \int_0^{\frac{B^2 n}{x^2}}  L_{m,n} 
                  e^{-\frac{m}{2}\lambda_m } \lambda_m^{\frac{1}{2}(n-m-1)}  d \lambda_m  \\
&   >   &   L_{m,n} e^{-\frac{B^2 m n}{2x^2}}
            \int_0^{\frac{B^2 n}{x^2}}  \lambda_m^{\frac{1}{2}(n-m-1)}  d \lambda_m  \\
&   =   & L_{m,n} e^{-\frac{B^2 m n}{2x^2}}
          \frac{2 n^{\frac{n-m+1}{2}}}{n-m+1} \left( \frac{B}{x} \right)^{n-m+1} \\
&   =   &  e^{-\frac{B^2 m n}{2x^2}}  \frac{\Gamma\left(\frac{n+1}{2}\right)} 
           {\Gamma\left(\frac{m}{2}\right)\left(\frac{n}{2}\right)^{\frac{n-m+1}{2}}} 
                  \frac{1}{ \Gamma(n-m+2) } \left( \frac{Bn}{x} \right)^{n-m+1}. 
\end{eqnarray*}
Note that
\begin{eqnarray*} 
\frac{n+1}{2} \Gamma\left(\frac{n+1}{2}\right)
> \sqrt{2 \pi} \left(\frac{n+1}{2}\right)^{\frac{n+2}{2}} e^{-\frac{n+1}{2}}, 
\end{eqnarray*}
and 
\begin{eqnarray*} 
\frac{m}{2} \Gamma\left(\frac{m}{2}\right)
< \sqrt{2 \pi} \left(\frac{m}{2}\right)^{\frac{m+1}{2}} e^{-\frac{m}{2} + \frac{1}{6m}}. 
\end{eqnarray*}
Therefore
\begin{eqnarray*} 
\frac{\Gamma\left(\frac{n+1}{2}\right)} 
{\Gamma\left(\frac{m}{2}\right)\left(\frac{n}{2}\right)^{\frac{n-m+1}{2}}} 
&   >   &  e^{-\frac{n-m+1}{2} -\frac{1}{6m}} \sqrt{ \frac{(n+1)^n}{m^{m-1} n^{n-m+1}} } \\
&   =   &  e^{-\frac{n-m+1}{2} -\frac{1}{6m}} \sqrt{ \frac{n^{n+1}(1+1/n)^{n+1}}{(n+1) m^{m-1} n^{n-m+1}} } \\
&   >   &  e^{-\frac{n-m+1}{2} -\frac{1}{6m}} \sqrt{ \frac{n e }{n+1} }.
\end{eqnarray*}
Since $2 \leq m \leq n$, therefore, we have
\begin{eqnarray*} 
\frac{\Gamma\left(\frac{n+1}{2}\right)}
{\Gamma\left(\frac{m}{2}\right)\left(\frac{n}{2}\right)^{\frac{n-m+1}{2}}} 
&   >   &  \sqrt{\frac{2 e^{ \frac{5}{6} } }{3}} e^{- \frac{n-m+1}{2}}.
\end{eqnarray*}
Therefore, we have 
\begin{eqnarray*}
P \left(\lambda_m \leq \frac{B^2 n}{x^2} \right) 
& >  & \sqrt{\frac{2 e^{ \frac{5}{6} } }{3}} e^{-\frac{B^2 m n}{2 x^2} } 
       \frac{1}{\Gamma(n-m+2) } \left( \frac{e^{-\frac{1}{2}}B n}{x} \right)^{n-m+1}.  
\end{eqnarray*}
\qquad
\end{proof}

Similar to real random matrices, we have
the following Lemma 5.2 for complex random matrices. Lemma 5.2 can be proved using the 
same techniques as Lemma 5.1, so we 
will omit the proof and only give the result.

\begin{lemma} \label{Complex case: prob inequality C}
For any $B > 0$, $x > 0 $, and $2 \leq m \leq n $, the smallest eigenvalue $\widetilde{\lambda}_{min}$ 
of $\widetilde{W}_{m,n}$ satisfies
\begin{eqnarray*}
P \left(\widetilde{\lambda}_{min} \leq \frac{B^2 n}{x^2} \right) 
>  e^{1-\frac{1}{12m}} e^{-\frac{B^2 mn}{2 x^2}}
   \frac{1}{ \Gamma(n-m+2)^2 } \left( \frac{e^{-1}B^2 n^2}{2x^2} \right)^{n-m+1}.  
\end{eqnarray*}
\end{lemma}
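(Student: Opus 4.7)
The plan is to mirror the proof of Lemma 5.1 with the complex ingredients substituted in. First I would invoke the lower bound on the density of $\widetilde{\lambda}_{min}$ from Lemma 3.4, namely $\widetilde{f}_{\widetilde{\lambda}_{min}}(t) \geq \widetilde{L}_{m,n}\, e^{-mt/2}\, t^{\,n-m}$, and integrate from $0$ to $B^2 n/x^2$. On that interval $e^{-mt/2} \geq e^{-m B^2 n /(2x^2)}$, so I can pull this exponential outside the integral and evaluate the remaining monomial integral as $\tfrac{1}{n-m+1}(B^2 n/x^2)^{n-m+1}$.

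Next I would use the identity $(n-m+1)\,\Gamma(n-m+1) = \Gamma(n-m+2)$ together with the explicit form of $\widetilde{L}_{m,n}$ from (3.16) to rewrite the constant. The two factors $\Gamma(n-m+1)$ and $(n-m+1)\Gamma(n-m+2)$ combine to produce $\Gamma(n-m+2)^{2}$ in the denominator, leaving a residual Gamma ratio $\Gamma(n+1)/\Gamma(m)$ and a numerical factor $2^{-(n-m+1)}$ coming from $\widetilde{L}_{m,n}$. Pairing this $2^{-(n-m+1)}$ with $n^{n-m+1}$ rewrites the $B$-$n$-$x$ piece as $(B^{2}n^{2}/(2x^{2}))^{n-m+1}$, which is the grouping that appears on the right-hand side of the claimed inequality.

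The only remaining task is to bound $\Gamma(n+1)/\Gamma(m)$ from below. Here I would apply Stirling's inequality (Lemma 2.7): the lower bound $\Gamma(n+1) > \sqrt{2\pi}\, n^{n+1/2} e^{-n}$ and the upper bound $\Gamma(m) < \sqrt{2\pi}\, m^{m-1/2} e^{-m + 1/(12m)}$ (obtained by writing $\Gamma(m) = \Gamma(m+1)/m$ and using (2.5) at $x=m$) together reduce the inequality to be verified to $(n/m)^{m-1/2} \geq 1$, which is immediate from $n \geq m \geq 2$. The surviving constants assemble into the prefactor $e^{1-1/(12m)}$ and convert $n^{n-m+1}$ into $(n/e)^{n-m+1}$, which accounts for the factor $e^{-1}$ inside the parenthesis in the claimed inequality.

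The main obstacle, and really the only delicate point, is the bookkeeping in the last step: one must extract exactly the prefactor $e^{1-1/(12m)}$ while simultaneously absorbing an $e^{-(n-m+1)}$ into the bracketed expression as $(e^{-1})^{n-m+1}$. The comparison $(n/m)^{m-1/2} \geq 1$ is tight when $m = n$, so this constant cannot be improved by the present argument, which is exactly why the statement takes the form shown. All other steps parallel the real case line for line.
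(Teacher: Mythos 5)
Your proposal is correct and is exactly the intended argument: the paper omits this proof precisely because it is the line-for-line complex analogue of Lemma 5.1, which is what you carry out. The bookkeeping checks: after the identity $(n-m+1)\Gamma(n-m+1)\Gamma(n-m+2)=\Gamma(n-m+2)^2$ and the Stirling bounds $\Gamma(n+1)>\sqrt{2\pi}\,n^{n+1/2}e^{-n}$, $\Gamma(m)<\sqrt{2\pi}\,m^{m-1/2}e^{-m+1/(12m)}$, the required inequality reduces to $(n/m)^{m-1/2}\geq 1$ and the prefactor $e^{1-1/(12m)}$ emerges exactly as claimed.
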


The proof of the following Lemma 5.3 is based on 
the upper bound of the joint probability density 
function of $\lambda_{max}$ and $\lambda_{min}$ 
in Lemma 3.1 and the upper bound of the incomplete
Gamma function in Lemma 2.5.

\begin{lemma} \label{Real case: prob inequality D}
For any $B\leq e^{-1.7}$, $x>0$, and $2 \leq m \leq n$, the largest eigenvalue
$\lambda_{max}$  and the smallest eigenvalue $\lambda_{min}$ 
of $W_{m,n}$ satisfy
\begin{eqnarray*}
P \left(\lambda_{min} \leq \frac{B^2 n}{x^2},\frac{\lambda_{max}}{\lambda_{min}} \leq x^2 \right)
&  <  & \frac {11 B^{m-1}} {4 \sqrt{4\pi}}  
        \frac{1}{\Gamma(n-m+2)} \left( \frac{ e^{-\frac{1}{2}}Bn}{x} \right)^{n-m+1}. 
\end{eqnarray*}
\end{lemma}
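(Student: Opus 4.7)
The plan is to mirror the strategy of Lemma~4.3, but adapted to the opposite regime in which $\lambda_{min}$ is small rather than $\lambda_{max}$ large. A preliminary observation is that since $\lambda_{max} \geq \lambda_{min}$ always, the event $\{\lambda_{max}/\lambda_{min} \leq x^2\}$ is empty whenever $x < 1$, so the claim is trivial there and I may assume $x \geq 1$ throughout. I would start by writing the probability as
$$
P\!\left(\lambda_{min} \leq \frac{B^2n}{x^2},\ \frac{\lambda_{max}}{\lambda_{min}} \leq x^2\right)
= \int_0^{B^2n/x^2}\!\!\int_t^{tx^2}\! f_{\lambda_{max},\lambda_{min}}(s,t)\,ds\,dt,
$$
insert the joint-density upper bound from Lemma~3.1, and enlarge the inner range from $[t,tx^2]$ to $[0,tx^2]$ for cleanness.

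Next, I would invoke Lemma~2.5 twice. The first application, to the inner $s$-integral with $a=1/2$ and $b=(n+m-3)/2$, requires $tx^2 \leq 2(n+m-3)$, which holds because $tx^2 \leq B^2 n \leq e^{-3.4} n \leq n+m-3$ for every $n \geq m \geq 2$. The inner integral is thereby bounded by $e^{-tx^2/2}(tx^2)^{(n+m-1)/2}$, and combining with the $s$-independent factors reduces the outer integrand to a constant times $e^{-t(1+x^2)/2}\,t^{n-1}$. The second application, to the outer $t$-integral with $a=(1+x^2)/2$ and $b=n-1$, requires $B^2n(1+x^2) \leq 2(n-1)x^2$; this follows from $x \geq 1$ (so $1+x^2 \leq 2x^2$) together with $B^2 \leq e^{-3.4} \leq (n-1)/n$ for $n \geq 2$. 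Dropping the resulting exponential factor, which is bounded by $1$, leaves
$$
P \;\leq\; \frac{B^{2n}\,n^n}{4\,\Gamma(m-1)\,\Gamma(n-m+1)\,x^{n-m+1}}.
$$

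The final step is to match this bound against the target. Using $\Gamma(n-m+2)=(n-m+1)\Gamma(n-m+1)$, the $\Gamma(n-m+1)$'s cancel and the leftover $\Gamma(m-1)$ in the denominator pairs with a surplus of $n^{m-2}$. Applying the Stirling-type estimate~(4.2), namely $n^{m-2}/\Gamma(m-1) < e^n/\sqrt{4\pi}$, reduces matters to the scalar inequality
$$
(n-m+1)\,(Be)^n\,e^{(n-m+1)/2} \;\leq\; 11.
$$
Writing $k = n-m+1 \geq 1$ and using $n \geq k+1$ (since $m \geq 2$) together with $B \leq e^{-1.7}$, the left-hand side is bounded by $k\,e^{-0.2k-0.7}$, whose maximum over $k \geq 1$ (attained at $k=5$) is $5\,e^{-1.7} \approx 0.91$, comfortably below $11$.

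I expect the main obstacle to be bookkeeping: carefully tracking the powers of $x$, $B$, $n$ and the Gamma factors through the two applications of Lemma~2.5, and then combining them cleanly with~(4.2). The specific threshold $B \leq e^{-1.7}$ is what makes the decay of $(Be)^n$ strong enough to absorb the $e^{(n-m+1)/2}$ growth generated by the $e^{-1/2}$ sitting inside the target's $(e^{-1/2}Bn/x)^{n-m+1}$.
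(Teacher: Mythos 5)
Your proposal follows essentially the same route as the paper's proof: bound the probability by the joint-density estimate of Lemma 3.1, apply Lemma 2.5 once to the inner $s$-integral and once to the outer $t$-integral, invoke inequality (4.2) to absorb $\Gamma(m-1)$, and finish with a scalar optimization in $n$ and $k=n-m+1$. The only substantive differences are cosmetic: the paper substitutes $u=tx^2$ and drops $e^{-u/(2x^2)}\le 1$ before the second application of Lemma 2.5 (so it never needs $x\ge 1$), whereas you keep the full exponent $e^{-t(1+x^2)/2}$ and justify $x\ge 1$ by emptiness of the event, which is fine. (Also, the correct admissibility condition for the inner integral is $tx^2\le n+m-3$, not $2(n+m-3)$; your verification establishes the tighter condition anyway.)

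One bookkeeping slip to fix: the surplus of powers of $n$ after cancellation is $n^n/n^{n-m+1}=n^{m-1}$, not $n^{m-2}$, so after applying (4.2) a factor of $n$ survives and the final scalar inequality must read
$n\,(n-m+1)\,(Be)^n\,e^{(n-m+1)/2}\le 11$,
which is exactly what the paper verifies (in the form $n(n-m+1)e^{3n/2}e^{-B^2n/2}B^n<11$). The corrected inequality still holds comfortably: with $B\le e^{-1.7}$ and $k=n-m+1\le n-1$ the left side is at most $n(n-1)e^{-0.2n-0.5}$, whose maximum over $n\ge 2$ is about $7.5<11$. So the slip does not invalidate the argument, but as written your closing estimate $k\,e^{-0.2k-0.7}\le 5e^{-1.7}$ bounds the wrong quantity.
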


\begin{proof}
From the upper bound for the joint probability density function of
$\lambda_{max}$ and $\lambda_{min}$ in Lemma 3.1, we have 
\begin{eqnarray*}
 P \left(\lambda_{min} \leq \frac{B^2 n}{x^2},\frac{\lambda_{max}}{\lambda_{min}} \leq x^2 \right)
&   =   & \int_0^{\frac{B^2 n}{x^2}} \int_0^{t x^2}  f_{\lambda_{max}, \lambda_{min} }(s,t) d s d t  \\
&   <   & C_{m,n} \int_0^{\frac{B^2 n}{x^2}} \int_0^{t x^2}  
            e^{-\frac{1}{2} t } t^{\frac{1}{2}(n-m-1)} 
            e^{-\frac{1}{2} s } s^{\frac{1}{2}(n+m-3)}    d s d t. 
\end{eqnarray*}
Taking the transform $u=t x^2$, we have
\begin{eqnarray*}
 P \left(\lambda_{min} \leq \frac{B^2 n}{x^2},\frac{\lambda_{max}}{\lambda_{min}} \leq x^2 \right)
&   =   & C_{m,n}  \left( \frac{1}{x} \right)^{n-m+1} 
          \int_0^{B^2 n}  e^{-\frac{u}{2 x^2}} u^{\frac{1}{2}(n-m-1)} \\ & & \qquad
          \left( \int_0^u e^{-\frac{1}{2}s } s^{\frac{1}{2}(n+m-3)} ds \right) d u. 
\end{eqnarray*}
According to Lemma 2.5, if $u \leq n+m-3$, then
\begin{eqnarray*}
\int_0^u e^{-\frac{1}{2}s } s^{\frac{1}{2}(n+m-3)} ds
&  \leq  &  e^{-\frac{1}{2}u } u^{\frac{1}{2}(n+m-1)}.
\end{eqnarray*}
Therefore, when $B \leq e^{-1.7}$, we have
\begin{eqnarray*}
 P \left(\lambda_{min} \leq \frac{B^2 n}{x^2},\frac{\lambda_{max}}{\lambda_{min}} \leq x^2 \right)
&  \leq  & C_{m,n}  \left( \frac{1}{x} \right)^{n-m+1} 
           \int_0^{B^2 n} e^{-\frac{u}{2 x^2} -\frac{1}{2}u } u^{n-1} d u \\
&  \leq  & C_{m,n}  \left( \frac{1}{x} \right)^{n-m+1}  \int_0^{B^2 n}  e^{-\frac{1}{2}u } u^{n-1} d u.
\end{eqnarray*}
Since $B \leq e^{-1.7}$, so $B^2 n \leq 2(n-1)$. Applying Lemma 2.5 again, we have
\begin{eqnarray*}
P \left(\lambda_{min} \leq \frac{B^2 n}{x^2},\frac{\lambda_{max}}{\lambda_{min}} \leq x^2 \right)
&  \leq  & C_{m,n}  \left( \frac{1}{x} \right)^{n-m+1}  e^{-\frac{B^2 n}{2} } B^{2n} n^n \\
&   =   &\frac{ e^{-\frac{B^2 n}{2} } B^{n+m-1} n^{m-1}  }{4\Gamma(m-1)\Gamma(n-m+1) }
                       \left( \frac{Bn}{x} \right)^{n-m+1}.   
\end{eqnarray*}
From (4.2), we have
\begin{eqnarray*}
\frac{n^{m-2}}{\Gamma(m-1) }
&   <   & \frac {e^{n}}{\sqrt{4 \pi}}. 
\end{eqnarray*}
Therefore, we have
\begin{eqnarray*}
P \left(\lambda_{min} \leq \frac{B^2 n}{x^2},\frac{\lambda_{max}}{\lambda_{min}} \leq x^2 \right)
&  \leq  &  \frac{ e^n e^{-\frac{B^2 n}{2} } B^{n+m-1} n  }{4 \sqrt{4 \pi} \Gamma(n-m+1) }
                       \left( \frac{Bn}{x} \right)^{n-m+1}   \\
&  \leq  & \frac{B^{m-1} n (n-m+1) e^{\frac{3}{2}n} e^{-\frac{B^2 n}{2} } B^{n} } {4 \sqrt{4\pi} }  \\ & & \qquad
                      \frac{1}{\Gamma(n-m+2)} \left( \frac{ e^{-\frac{1}{2}}Bn}{x} \right)^{n-m+1}.
\end{eqnarray*}
When $B \leq e^{-1.7}$, for all $n \geq m \geq 2$, we have
\begin{eqnarray*}
n (n-m+1)  e^{\frac{3}{2}n} e^{-\frac{B^2 n}{2} } B^{n} 
&  <  & 11.
\end{eqnarray*}
Therefore,when $B \leq e^{-1.7}$, we have
\begin{eqnarray*}
P \left(\lambda_{min} \leq \frac{B n}{x^2},\frac{\lambda_{max}}{\lambda_{min}} \leq x^2 \right)
&  <  & \frac {11 B^{m-1}} {4 \sqrt{4\pi}}  
        \frac{1}{\Gamma(n-m+2)} \left( \frac{ e^{-\frac{1}{2}}Bn}{x} \right)^{n-m+1}. 
\end{eqnarray*}
\qquad
\end{proof}

Similar to real random matrices, we have
the following Lemma 5.4 for complex random matrices. 
Lemma 5.4 can be proved using the 
same techniques as Lemma 5.3, so we 
will omit the proof and only give the result.

\begin{lemma} \label{Complex case: prob inequality D}
For any $B^2 \leq e^{-1.2}$, $x>0$, and $2 \leq m \leq n$, the largest eigenvalue
$\widetilde{\lambda}_{max}$  and the smallest eigenvalue $\widetilde{\lambda}_{min}$ 
of  $\widetilde{W}_{m,n}$ satisfy
\begin{eqnarray*}
P \left(\widetilde{\lambda}_{min} \leq \frac{B n}{x^2},
        \frac{\widetilde{\lambda}_{max}}{\widetilde{\lambda}_{min}}  \leq x^2 \right)
&  <  & 0.0352 \frac{1}{\Gamma(n-m+2)^2} \left( \frac{ e^{-1}B^2n^2}{2x^2} \right)^{n-m+1}. 
\end{eqnarray*}
\end{lemma}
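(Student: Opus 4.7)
The plan is to mirror the proof of Lemma 5.3, substituting the complex analogues at each step. I begin with the joint-density upper bound of Lemma 3.2 to write
$$P\!\left(\widetilde{\lambda}_{min}\leq\frac{B^2 n}{x^2},\,\frac{\widetilde{\lambda}_{max}}{\widetilde{\lambda}_{min}}\leq x^2\right)\leq \widetilde{C}_{m,n}\int_0^{B^2n/x^2}\!\int_0^{tx^2} e^{-(s+t)/2}s^{n+m-2}t^{n-m}\,ds\,dt.$$
Substituting $u=tx^2$ in the outer integral reduces the inner integral to $\int_0^u e^{-s/2}s^{n+m-2}\,ds$, which by Lemma 2.5 (with $a=1/2$, $b=n+m-2$) is bounded by $e^{-u/2}u^{n+m-1}$ whenever $u\leq 2(n+m-2)$. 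The hypothesis $B^2\leq e^{-1.2}$ keeps $u$ safely in this range for all $n\geq m\geq 2$.

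Next, I would combine the bounds into a single integral of the form $\int_0^{B^2 n} e^{-u/2-u/(2x^2)}u^{2n-2}\,du$, discard the harmless $e^{-u/(2x^2)}$ factor, and apply Lemma 2.5 once more---the required condition $B^2 n\leq 4(n-1)$ is automatic for $n\geq 2$. After substituting the explicit form of $\widetilde{C}_{m,n}$ from Lemma 3.2 and regrouping so that the target factors $\bigl(e^{-1}B^2n^2/(2x^2)\bigr)^{n-m+1}$ and $1/\Gamma(n-m+2)^2$ are isolated, the leftover prefactor takes the schematic form $\frac{B^{2(m-1)}\,n^{p}\,e^{\alpha n}\,e^{-B^2 n/2}B^{2n}}{\Gamma(m-1)\Gamma(m)}$ for explicit constants $p,\alpha$ coming directly from the bookkeeping.

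To control this residual uniformly in $m$ and $n$, I would derive a complex analogue of (4.2) by applying the Stirling estimates of Lemma 2.7 to both $\Gamma(m-1)$ and $\Gamma(m)$, converting the Gamma denominators into an $e^{cn}$ factor. What survives is a scalar function $h(m,n;B)$ whose dominant exponential behaviour is $\exp\!\bigl((\gamma+2\log B-B^2/2)n\bigr)$; the threshold $B^2\leq e^{-1.2}$ is precisely tuned so that this exponent is strictly negative on $\{n\geq m\geq 2\}$, hence $h$ attains its supremum at small extremal $(m,n)$ and a direct numerical evaluation yields the constant $0.0352$. The main obstacle will be exactly this final bookkeeping: routing the powers of $B$ and $n$ between the target factor and the prefactor so that the exponent of the dominant term is strictly negative under $B^2\leq e^{-1.2}$, and verifying numerically that this threshold is sharp enough to force the claimed constant $0.0352$.
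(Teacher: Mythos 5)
Your plan is the right one and is exactly what the paper intends (it omits this proof, saying only that it follows the method of Lemma 5.3): bound the probability by the double integral of the joint density from Lemma 3.2, substitute $u=tx^2$, apply Lemma 2.5 to the inner integral (condition $u\leq 2(n+m-2)$, satisfied since $u\leq B^2n\leq e^{-1.2}n$), drop $e^{-u/(2x^2)}$, apply Lemma 2.5 to the outer integral, insert $\widetilde{C}_{m,n}$, and control the residual with a complex analogue of (4.2). One small slip: after combining, the integrand is $u^{n-m}\cdot u^{n+m-1}=u^{2n-1}$, not $u^{2n-2}$, so the second application of Lemma 2.5 produces $e^{-B^2n/2}(B^2n)^{2n}$.

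The genuine gap is the final quantitative step, which you defer to "direct numerical evaluation": carried out literally, your chain of bounds does \emph{not} yield the constant $0.0352$. Tracking the bookkeeping, the residual prefactor multiplying $\Gamma(n-m+2)^{-2}\left(e^{-1}B^2n^2/(2x^2)\right)^{n-m+1}$ is
\[
g(m,n)=\frac{(n-m+1)\,e^{\,n-m+1}\,e^{-B^2n/2}\,(B^2)^{\,n+m-1}\,n^{2m-2}}{2^{\,n+m-1}\,\Gamma(m-1)\Gamma(m)} ,
\]
and while its leading exponential rate $1+\log B^2-\log 2-B^2/2\approx -1.04$ is indeed negative at $B^2=e^{-1.2}$ (so the supremum is finite and attained at small $(m,n)$, as you say), the maximum is $g(2,3)\approx 0.0435>0.0352$; e.g.\ at $(m,n)=(2,3)$ your bound gives $0.00271\,x^{-4}$ against a target of $0.00219\,x^{-4}$. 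So the threshold $B^2\leq e^{-1.2}$ is \emph{not} "sharp enough to force the claimed constant" via this route. The fix is to be less wasteful at the outer integral: since $\int_0^{B^2n}e^{-u/2}u^{2n-1}\,du\leq (B^2n)^{2n}/(2n)$, and this improves on Lemma 2.5 by the factor $e^{B^2n/2}/(2n)<1$ for all $n\geq 2$ when $B^2\leq e^{-1.2}$, the residual drops to about $0.011$ and the lemma follows with room to spare. Without some such sharpening, your plan proves the lemma only with a constant of roughly $0.044$ in place of $0.0352$.
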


We are now prepared to derive the lower bounds for
the tails of the condition number distributions of
random matrices whose elements are 
independent and identically distributed standard 
normal random variables

\begin{theorem} \label{ Real Case: new lower bounds for tail }
For any $x \geq n-m+1$ and $n \geq m \geq 2$, the 2-norm condition number of $G_{m \times n}$
satisfies
\begin{eqnarray}
P\left(\frac{\kappa_2(G_{m \times n}) } {{n}/{(n-m+1)}}  > x\right)  
> \frac{1} {\sqrt{2 \pi}} \left(\frac{c}{x} \right)^{n-m+1}, 
\end{eqnarray}
where $c\geq 0.245 $ is a universal positive constant independent 
of $x, m$, and $n$.
\end{theorem}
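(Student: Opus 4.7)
The strategy is to obtain the lower bound in direct parallel with the upper bound proof of Theorem~4.5, exploiting the structural fact that Lemma~5.1 and Lemma~5.3 carry exactly the same prefactor $\Gamma(n-m+2)^{-1}(e^{-1/2}Bn/X)^{n-m+1}$. Writing $X = xn/(n-m+1)$, so that the hypothesis $x \geq n-m+1$ becomes $X \geq n$, the identity
\begin{eqnarray*}
P(\lambda_{min} \leq B^2 n/X^2)
& = & P(\lambda_{min} \leq B^2 n/X^2,\; \lambda_{max}/\lambda_{min} > X^2) \\
&   & {} + P(\lambda_{min} \leq B^2 n/X^2,\; \lambda_{max}/\lambda_{min} \leq X^2),
\end{eqnarray*}
together with the obvious bound $P(\kappa_2(G_{m \times n}) > X) \geq P(\lambda_{min} \leq B^2 n/X^2,\; \lambda_{max}/\lambda_{min} > X^2)$ and the two lemmas, yields
$$
P(\kappa_2(G_{m \times n}) > X) > \left[ \sqrt{\frac{2e^{5/6}}{3}}\, e^{-B^2 mn/(2X^2)} - \frac{11 B^{m-1}}{4\sqrt{4\pi}} \right] \frac{1}{\Gamma(n-m+2)} \left(\frac{e^{-1/2}Bn}{X}\right)^{n-m+1}.
$$

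I would then fix $B = e^{-1.7}$, the boundary value permitted by Lemma~5.3. Since $X \geq n$ and $m \leq n$, one has $B^2 mn/(2X^2) \leq B^2/2 < 0.02$; since $0 < B < 1$, also $B^{m-1} \leq B$ for all $m \geq 2$. A direct numerical check then shows the bracket is bounded below by an absolute constant $\eta \approx 1.076 > 1$, independent of $m, n, x$. Invoking the lower Stirling estimate of Lemma~2.7 in the form $\Gamma(n-m+2) < \sqrt{2\pi}\,(n-m+1)^{n-m+3/2}\, e^{-(n-m+1)+1/[12(n-m+1)]}$ and substituting back $X = xn/(n-m+1)$ collapses the prefactor to
$$
\frac{1}{\Gamma(n-m+2)} \left(\frac{e^{-1/2}Bn}{X}\right)^{n-m+1} > \frac{e^{-1/[12(n-m+1)]}}{\sqrt{2\pi(n-m+1)}} \left(\frac{e^{1/2}B}{x}\right)^{n-m+1}.
$$

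To reach the target form $(c/x)^{n-m+1}/\sqrt{2\pi}$, the residual factor $1/\sqrt{n-m+1}$ together with $\eta$ and $e^{-1/[12(n-m+1)]}$ must be absorbed into the base of the power. Since the map $k \mapsto k^{1/(2k)}$ is bounded on positive integers by $3^{1/6} < 1.21$, one has $(n-m+1)^{-1/[2(n-m+1)]} \geq 3^{-1/6}$ uniformly in $m,n$, and combining this with $e^{1/2}B = e^{-1.2}$ and the remaining sub-unit factors yields an admissible universal constant close to $e^{-1.2}\cdot 3^{-1/6} \approx 0.25$, safely above $0.245$; the binding case is $n-m+1 = 3$, while the constraint slackens both at $n-m+1 = 1$ and as $n-m+1 \to \infty$. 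The main obstacle is precisely this last bookkeeping step: $B$ must be chosen small enough that Lemma~5.3 applies and the bracket remains positive, yet large enough to keep $(e^{1/2}B)^{n-m+1}$ sizable, and the residual from Stirling's formula must be uniformly controlled over all integer $n-m+1 \geq 1$ rather than just asymptotically.
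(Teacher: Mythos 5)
Your proposal is correct and follows essentially the same route as the paper's own proof: the identical decomposition of $P(\lambda_{min}\leq B^2n/X^2)$, Lemmas 5.1 and 5.3 with $B=e^{-1.7}$, Stirling via Lemma 2.7, and absorption of $\sqrt{n-m+1}$ into the base of the power (your $3^{1/6}$ is exactly the paper's $1.21$). Your justification of the bracket being $\geq 1.076$ via $X\geq n$ is in fact slightly more careful than the paper's, which only invokes $x\geq 1$ at that step.
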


\begin{proof}
For any positive constant $H$, we have
\begin{eqnarray*}
P( \kappa_2(G_{m \times n}) > x ) 
&  =  &  P \left( \frac{\lambda_1}{\lambda_m} > x^2 \right)   \\
&  >  &  P \left( {\lambda_m} \leq \frac{H^2n}{x^2}, \frac{\lambda_1}{\lambda_m} > x^2 \right)   \\
&  =  &  P \left( {\lambda_m} \leq \frac{H^2n}{x^2} \right)  
        -P \left( {\lambda_m} \leq \frac{H^2n}{x^2}, 
                  \frac{\lambda_1}{\lambda_m} \leq x^2 \right). 
\end{eqnarray*}
Let $H = e^{-1.7}$, then based on Lemma 5.1 and Lemma 5.3, we have
\begin{eqnarray*}
P( \kappa > x ) 
&  >  & \left( \sqrt{ \frac{2e^{\frac{5}{6}} } {3} } e^{-\frac{H^2 m n}{2 x^2}} 
          - \frac {11 H^{m-1}} {4 \sqrt{4\pi}} \right) 
         \frac{1}{ \Gamma(n-m+2) } \left( \frac{ e^{-\frac{1}{2}}H n}{x} \right)^{n-m+1}.
\end{eqnarray*}
From Lemma 2.7, we have
\begin{eqnarray*}
 \Gamma(n-m+2)  < \sqrt{2 \pi (n-m+1)} (n-m+1)^{n-m+1} e^{-(n-m+1)+\frac{1}{12(n-m+1)}}.
\end{eqnarray*}
Note that, for $2 \leq m \leq n$, we have
\begin{eqnarray*}
\sqrt{n-m+1} < 1.21^{n-m+1} \mbox{, and } \frac{1}{12(n-m+1)} \leq \frac{1}{12},
\end{eqnarray*}
Therefore, we have
\begin{eqnarray*}
P\left(\kappa_2(G_{m,n})  > x\right)  
&  >  &  \left( \sqrt{ \frac{2e^{\frac{5}{6}} } {3} } e^{-\frac{H^2 m n}{2 x^2}}
            - \frac {11 H^{m-1}} {4 \sqrt{4\pi}} \right)
          \frac{e^{- \frac{1}{12}}} {\sqrt{2 \pi}}  
         \left( \frac{\frac{e}{1.21 (n-m+1)} e^{-\frac{1}{2}}Hn } {x} \right)^{n-m+1}.
\end{eqnarray*}
Since $H=e^{-1.7}$, $x \geq 1$, and $2 \leq m \leq n$, so we have
\begin{eqnarray*}
 \left( \sqrt{ \frac{2e^{\frac{5}{6}} } {3} }   e^{-\frac{H^2 m n}{2 x^2}}
   - \frac {11 H^{m-1}} {4 \sqrt{4\pi}} \right) 
  e^{- \frac{1}{12}}
&  >  & 0.99.
\end{eqnarray*}
Therefore, we have
\begin{eqnarray*}
P\left(\kappa_2(G_{m,n})  > x\right)  
&  >  &   \frac{0.99} {\sqrt{2 \pi}} \left( \frac{0.248 \frac{n}{n-m+1} }{x} \right)^{n-m+1} \\
&  >  &   \frac{1} {\sqrt{2 \pi}} \left( \frac{0.245 \frac{n}{n-m+1} }{x} \right)^{n-m+1}.
\end{eqnarray*}
Therefore
\begin{eqnarray*}
P\left(\frac{\kappa_2(G_{m,n}) } {{n}/{(n-m+1)}}  > x\right)  
&  >  &  \frac{1} {\sqrt{2 \pi}}\left( \frac{0.245 }{x} \right)^{n-m+1}. 
\end{eqnarray*}
Let $c = 0.245$, then we get (5.1).
\qquad
\end{proof}

Remark:
\begin{remunerate}

\item 
The lower bound in Theorem 5.5 is for arbitrary $n \geq m \geq 2$ 
and $x \geq n-m+1$. 
For some special case of $m$ and $n$, more precise lower bound can be obtained. 
For example, for the special case of real random $m \times m$ matrices,
where $m \geq 3$, 
it has been proved in \cite{Azais03} that
\begin{eqnarray*}
P \left(\kappa_2(G_{m \times m})   >  m \, .  \, x   \right)
> \frac{c }{x},
\end{eqnarray*}
where $c \geq 0.13$ is a universal positive constant independent 
of $x$ and $m$.
 
In Theorem 5.5, however, if we take $m = n$, then we can only get
\begin{eqnarray*}
P\left(\kappa_2(G_{m \times m})   > m \, . \, x \right)
> \frac{0.097 }{x},
\end{eqnarray*}

\item For the special case of real random $2 \times n$ matrices, 
based on the exact probability density function of $\kappa_2(G_{2 \times n})$ 
in \cite{Edelman89}, we can get
\begin{eqnarray*}
P\left(\kappa_2(G_{2 \times n}) > x \right) = \left( \frac{2x}{x^2+1} \right)^{n-1}
\sim  \left( \frac{2}{x} \right)^{n-1}
\end{eqnarray*}
as $x \rightarrow \infty$. Hence, the constant $c$ in Theorem 5.5 
is no larger than $2$.
Therefore, the constant $c$ in Theorem 5.5 actually satisfies 
\begin{eqnarray}
0.245 \leq c \leq 2.
\end{eqnarray}
\end{remunerate}

Similar to real random matrices, we have
the following Theorem 5.6 for complex random matrices. 
Theorem 5.6 can be proved using the 
same techniques as Theorem 5.5, so we 
will omit the proof and only give the result.

\begin{theorem} \label{ Complex Case: new lower bounds for tail }
For any  $x \geq n-m+1$ and $n \geq m \geq 2$, the 2-norm condition number of $G_{m \times n}$
satisfies
\begin{eqnarray*}
P\left(\frac{\kappa_2(\widetilde{G}_{m \times n}) } {{n}/{(n-m+1)}}  > x\right)  
> \frac{1} {2 \pi} \left(\frac{c}{x} \right)^{2(n-m+1)}, 
\end{eqnarray*}
where $c\geq 0.319 $ is a universal positive constant independent 
of $x, m$, and $n$.
\end{theorem}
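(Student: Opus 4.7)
The plan is to mirror the proof of Theorem 5.5, substituting the complex-case analogs at every step. By Proposition 2.2 and squaring, it suffices to bound $P(\widetilde{\lambda}_{max}/\widetilde{\lambda}_{min} > x^2)$ from below. As in the real case, for any positive constant $H$ we split
\begin{eqnarray*}
P( \kappa_2(\widetilde{G}_{m \times n}) > x )
& = & P\!\left(\frac{\widetilde{\lambda}_{max}}{\widetilde{\lambda}_{min}} > x^2\right) \\
& > & P\!\left(\widetilde{\lambda}_{min} \leq \frac{H^2 n}{x^2}\right) - P\!\left(\widetilde{\lambda}_{min} \leq \frac{H^2 n}{x^2},\ \frac{\widetilde{\lambda}_{max}}{\widetilde{\lambda}_{min}} \leq x^2\right).
\end{eqnarray*}
The first term is bounded below by Lemma 5.2, and the second (to be subtracted) is bounded above by Lemma 5.4, provided we choose $H$ with $H^2 \leq e^{-1.2}$ so that Lemma 5.4 applies.

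With that choice, both bounds share the common factor $\Gamma(n-m+2)^{-2} (e^{-1} H^2 n^2/(2 x^2))^{n-m+1}$, so the difference factors as
$$
\left(e^{1-\frac{1}{12m}} e^{-\frac{H^2 mn}{2 x^2}} - 0.0352\, H^{2(m-1)}\right)\frac{1}{\Gamma(n-m+2)^2}\left(\frac{e^{-1}H^2 n^2}{2 x^2}\right)^{n-m+1}.
$$
The next step is to choose $H^2 = e^{-1.2}$ (the boundary value of the hypothesis of Lemma 5.4), which keeps $H^{2(m-1)}$ small for all $m\geq 2$ while keeping the exponential factor $e^{1-1/(12m)}e^{-H^2mn/(2x^2)}$ close to $e$, since $x \geq n-m+1 \geq 1$ and $H^2 mn/x^2$ stays bounded in the regime of interest. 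A short numerical check shows the bracketed quantity is bounded below by an absolute positive constant.

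To convert the prefactor into the claimed $(c/x)^{2(n-m+1)}/(2\pi)$ shape, I would apply the upper half of Stirling's formula (Lemma 2.7) to $\Gamma(n-m+2)^2$, obtaining
$$
\Gamma(n-m+2)^2 < 2\pi(n-m+1)\,(n-m+1)^{2(n-m+1)}e^{-2(n-m+1)+\frac{1}{6(n-m+1)}},
$$
and then absorb the factor $n-m+1$ into an exponential (using $\sqrt{n-m+1} < 1.21^{n-m+1}$ as in the real proof) so the $\sqrt{2\pi}^{\,2}=2\pi$ denominator emerges clean. Collecting the constants gives a bound of the form $(0.99/(2\pi))(c_0 \, n/(n-m+1)/x)^{2(n-m+1)}$, and dividing by $(n/(n-m+1))^{2(n-m+1)}$ on both sides of the probability statement yields the theorem with some explicit $c \geq 0.319$.

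The main obstacle is the bookkeeping of universal constants: one must verify, uniformly in $m,n\geq 2$ and $x\geq n-m+1$, that the bracketed expression $e^{1-1/(12m)}e^{-H^2 mn/(2x^2)} - 0.0352\, H^{2(m-1)}$ stays sufficiently far from zero so that, after combining with the $e^{-1/(6(n-m+1))}$ and $1.21^{-2(n-m+1)}$ losses from Stirling, the final constant still exceeds $0.319$. This is a purely numerical optimization in the single free parameter $H$; the choice $H^2 = e^{-1.2}$ is natural but the exact constant $0.319$ may require a slightly more careful balance, analogous to the $0.245$ obtained in Theorem 5.5. Everything else is a direct transcription of the real-case argument, with exponents $n-m+1$ replaced by $2(n-m+1)$ and the $1/\sqrt{2\pi}$ prefactor replaced by $1/(2\pi)$ to reflect the squaring inherent to the complex Wishart density.
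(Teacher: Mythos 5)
Your proposal matches the paper's intended proof: the paper omits the argument for Theorem 5.6, stating only that it follows the technique of Theorem 5.5, and your transcription (split the event via Lemmas 5.2 and 5.4 with $H^2=e^{-1.2}$, then apply Stirling to $\Gamma(n-m+2)^2$ and absorb $\sqrt{n-m+1}$ into $1.21^{n-m+1}$) is exactly that. Two small points to make explicit: the bracketed quantity is uniformly positive only because, after the final rescaling by $n/(n-m+1)$, the effective (unscaled) threshold is at least $n$, so that $H^2mn/(2x^2)\leq H^2/2$ stays bounded (the bound would fail for square matrices if one only used $x\geq 1$); and Lemma 5.4's constant is $0.0352$ with no extra factor $H^{2(m-1)}$, which is harmless here since $0.0352$ is still far below $e^{1-1/(12m)}e^{-H^2/2}\approx 2.2$.
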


\section{The upper bounds for the expected logarithms}

For square Gaussian random matrix $G_{n \times n}$, in \cite{Smale85}, Smale asked
for $E ( \log \kappa_2(G_{n \times n}))$. Similarly, for rectangular Gaussian random 
matrix $G_{m \times n}$, it is also interesting to investigate $E ( \log \kappa_2(G_{m \times n}))$.
In this section, we will derive upper bounds for $E ( \log \kappa_2(G_{m \times n}))$
and $E ( \log \widetilde{\kappa}_2(G_{m \times n}))$.
Our main results are Theorem 6.1
and Theorem 6.2. 

\begin{theorem} \label{ bounds for mean }
For any $n \geq m \geq 2$, the 2-norm condition number of $G_{m \times n}$
satisfies
\begin{eqnarray}
E(\log\kappa_2(G_{m \times n})) < \log\frac{n}{n-m+1} + 2.258.
\end{eqnarray}
\end{theorem}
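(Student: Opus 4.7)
The plan is to turn $E(\log\kappa_2)$ into a tail integral and evaluate that integral using the tail bound from Theorem~4.5. Write $M = n/(n-m+1)$ and $k = n-m+1$, so the target inequality reads $E(\log\kappa_2) < \log M + 2.258$.

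I would first reduce to a positive-part expectation via the pointwise inequality $\log\kappa_2 = \log M + \log(\kappa_2/M) \le \log M + (\log(\kappa_2/M))^+$, which is an equality on $\{\kappa_2 \ge M\}$ and trivially valid when $\kappa_2 < M$ (the right-hand side then equals $\log M > \log \kappa_2$). Taking expectations, it suffices to show $E((\log(\kappa_2/M))^+) < 2.258$. By the layer-cake formula $E(Z^+) = \int_0^\infty P(Z>t)\,dt$ applied to $Z = \log(\kappa_2/M)$, followed by the substitution $u = e^t$, this expectation equals
\begin{equation*}
\int_1^\infty \frac{P(\kappa_2/M > u)}{u}\,du.
\end{equation*}

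The main step is to insert the tail estimate $P(\kappa_2/M > u) < (C/u)^k/\sqrt{2\pi}$ of Theorem~4.5 with $C \le 6.414$, and split the integral at $u = C$. On $[1,C]$ I would discard the tail information and use $P \le 1$, contributing $\log C$. On $[C,\infty)$ I would use the tail bound, contributing
\begin{equation*}
\int_C^\infty \frac{C^k}{\sqrt{2\pi}\,u^{k+1}}\,du = \frac{1}{k\sqrt{2\pi}}.
\end{equation*}
Summing, $E((\log(\kappa_2/M))^+) \le \log C + 1/(k\sqrt{2\pi}) \le \log(6.414) + 1/\sqrt{2\pi} < 2.258$, which closes the proof.

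The only point requiring care is the applicability of the tail bound on the interval $[C,\infty)$: Theorem~4.5 is stated with the hypothesis $u \ge k$, and when $k > C$ the interval $[C,k]$ is not covered by that hypothesis. However, inspecting the derivation of Theorem~4.5 shows that the inequality $P(\kappa_2/M > u) < (C/u)^k/\sqrt{2\pi}$ arises from combining Lemmas~4.1 and 4.3 (valid for every $u > 0$) with Stirling's lower bound on $\Gamma(k+1)$ (valid for every $k \ge 1$), and never actually uses $u \ge k$; that condition is attached to the final statement only to mark the range on which the right-hand side falls below $1$. Consequently the tail bound may be applied on all of $[C,\infty)$ for every $k \ge 1$, making the computation above legitimate across the full parameter range.
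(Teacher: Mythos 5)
Your proof is correct and follows essentially the same route as the paper's: both reduce $E(\log\kappa_2)$ to a tail integral of $P(\kappa_2/M>u)/u$, insert the bound of Theorem~4.5, and arrive at $\log C + 1/((n-m+1)\sqrt{2\pi}) \le \log 6.414 + 1/\sqrt{2\pi} < 2.258$ (the paper drops the negative part of $\log(x/(CM))$ where you instead bound $P\le 1$ on $[1,C]$, which is the same estimate in different clothing). Your closing observation --- that the hypothesis $x\ge n-m+1$ in Theorem~4.5 is never used in its derivation, so the tail bound is legitimately applicable on all of $[C,\infty)$ --- is accurate and actually patches a point that the paper's own proof passes over in silence.
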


\begin{proof}
Let  $f_{\kappa}(x)$ be the probability density function of $\kappa_2(G_{m \times n})$, then
\begin{eqnarray*}
E \log \left( \frac{\kappa_2(G_{m \times n})} {6.414 \frac{n}{n-m+1} } \right)
&  =  &  \int_1^\infty \log \left( \frac{x} {6.414 \frac{ n}{n-m+1} } \right)  f_{\kappa}(x) dx  \\
&  <  &  \int_{6.414 \frac{n}{n-m+1}}^\infty \log \left( \frac{x} {6.414\frac{ n}{n-m+1} } \right) 
          f_{\kappa}(x) dx      \\
&  =  &  \int_{6.414 \frac{ n}{n-m+1}}^\infty  P(\kappa_2(G_{m \times n})>x) \quad \frac{1}{x} \quad dx.  
\end{eqnarray*}

From Theorem 4.3, we have
\begin{eqnarray*}
P\left(\kappa_2(G_{m \times n}) > x\right) 
< \frac{1}{\sqrt{2\pi}} \left( \frac{6.414 \frac{n}{n-m+1}}{x} \right)^{n-m+1}.
\end{eqnarray*}

Therefore, we have
\begin{eqnarray*}
E \log\left( \frac{\kappa_2(G_{m \times n})} {6.414 \frac{n}{n-m+1} } \right)
&  <  &  \frac{1}{\sqrt{2\pi}}\int_{6.414 \frac{ n}{n-m+1}}^\infty  
         \left( \frac{6.414 \frac{ n}{n-m+1}}{x} \right)^{n-m+1} \quad \frac{1}{x} \quad dx   \\
&  =  &   \frac{1}{ (n-m+1) \sqrt{2\pi} }    \\
&  <  &   0.399.
\end{eqnarray*}

Therefore, we have
\begin{eqnarray*}
E \log ( \kappa_2(G_{m \times n}) ) 
&  <  & \log  \frac{n}{n-m+1}  + \log 6.414 + 0.399  \\
&  <  &  \log  \frac{n}{n-m+1} + 2.258.  
\end{eqnarray*}
\qquad
\end{proof}

Remark:
\begin{remunerate}

\item For the special case of real random $m \times m$ matrices,
from the results in \cite{Szarek91}, we can get
\begin{eqnarray}
E \log ( \kappa_2(G_{m \times m}) ) \leq \log m + \frac{3+3\log2}{2} \approx 2.54.
\end{eqnarray}

In Theorem 6.1, if we take $m = n$, then we have
\begin{eqnarray*}
E \log ( \kappa_2(G_{m \times n}) ) <  \log n + 2.258.
\end{eqnarray*}
which is a slightly improved version of (6.2).

\item The upper bound in Theorem 6.1 is for arbitrary $n \geq m \geq 2$. 
For some special case of $m$ and $n$ or large $m$ and $n$, more precise results exist: 

For the special case of real random $2 \times n$ matrices, it was
shown in \cite{Edelman88} that
\begin{eqnarray*}
E \log ( \kappa_2(G_{2 \times n}) ) = \frac{1}{2} \sqrt{\pi} 
\frac{\Gamma \left(\frac{n-1}{2} \right)}  {\Gamma \left(\frac{n}{2} \right)}.
\end{eqnarray*}

For real random $m \times m$ matrices,
it has been proved in \cite{Edelman88} that
\begin{eqnarray*}
E \log ( \kappa_2(G_{m \times m}) ) = \log m + c + o(1)
\end{eqnarray*}
as $m \rightarrow \infty$, where $c \approx 1.537$.

For rectangular matrix $G_{m_n \times n}$, if $\lim_{n \rightarrow \infty} m_n/n = y$
and $0 < y < 1$, then it has been proved in \cite{Edelman88} that
\begin{eqnarray*}
E \log ( \kappa_2(G_{m_n \times n}) ) = \log \frac{1+\sqrt{y}}{1-\sqrt{y}} + o(1)
\end{eqnarray*}
as $n \rightarrow \infty$

\end{remunerate}

Similar to real random matrices, we have
the following Theorem 6.2 for complex random matrices. 
Theorem 6.2 can be proved using the 
same techniques as Theorem 6.1, so we 
will omit the proof and only give the result.

\begin{theorem} \label{ Complex case: bounds for mean }
For any $n \geq m \geq 2$, the 2-norm condition number of $G_{m \times n}$
satisfies
\begin{eqnarray*}
E(\log\kappa_2(\widetilde{G}_{m \times n})) < \log\frac{n}{n-m+1} + 2.240.
\end{eqnarray*}
\end{theorem}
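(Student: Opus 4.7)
The plan is to mimic the proof of Theorem 6.1 verbatim, substituting the complex tail bound from Theorem 4.6 for the real one from Theorem 4.5. Writing $\widetilde{\kappa}$ for $\kappa_2(\widetilde{G}_{m \times n})$ and $\widetilde{K} = \widetilde{C}\cdot n/(n-m+1)$ with $\widetilde{C} = 6.298$, I would first use the layer-cake/integration-by-parts identity
\[
E\log\!\left(\frac{\widetilde{\kappa}}{\widetilde{K}}\right)
\;<\; \int_{\widetilde{K}}^{\infty} \log(x/\widetilde{K})\, f_{\widetilde{\kappa}}(x)\, dx
\;=\; \int_{\widetilde{K}}^{\infty} P(\widetilde{\kappa} > x)\, \frac{1}{x}\, dx,
\]
exactly as in Theorem~6.1, noting that $\log(x/\widetilde{K})\le 0$ for $x\le \widetilde{K}$ lets us drop that part of the range.

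Next I would insert the complex tail bound from Theorem~4.6,
\[
P(\widetilde{\kappa} > x)
\;<\; \frac{1}{2\pi}\left(\frac{\widetilde{K}}{x}\right)^{2(n-m+1)},
\]
which converts the remaining bound into the elementary integral
\[
E\log\!\left(\frac{\widetilde{\kappa}}{\widetilde{K}}\right)
\;<\; \frac{1}{2\pi}\int_{\widetilde{K}}^{\infty}\!\left(\frac{\widetilde{K}}{x}\right)^{\!2(n-m+1)}\!\frac{dx}{x}
\;=\; \frac{1}{4\pi(n-m+1)} \;\le\; \frac{1}{4\pi}.
\]
Rearranging and substituting the explicit value of $\widetilde{K}$ yields
\[
E\log\widetilde{\kappa} \;<\; \log\frac{n}{n-m+1} + \log\widetilde{C} + \frac{1}{4\pi},
\]
and then it is just a matter of checking numerically that $\log 6.298 + 1/(4\pi) < 2.240$.

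There is no real obstacle here beyond routine bookkeeping: all of the heavy lifting, in particular controlling the joint density of $\widetilde{\lambda}_{\max},\widetilde{\lambda}_{\min}$ and extracting the $x^{-2(n-m+1)}$ decay, has already been done in Lemmas~3.1--3.4 and folded into Theorem~4.6. The only care needed is that the exponent doubles in the complex case (it is $\beta(n-m+1)$ with $\beta=2$), which is precisely what improves the remainder from $1/(\sqrt{2\pi}(n-m+1))$ in Theorem~6.1 to $1/(4\pi(n-m+1))$ here; this, together with the slightly smaller constant $\widetilde{C}\le 6.298$ in place of $C\le 6.414$, is what reduces the additive constant from $2.258$ to $2.240$.
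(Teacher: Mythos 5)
Your proposal is correct and is exactly the argument the paper intends: the paper omits the proof of Theorem 6.2, saying only that it follows the technique of Theorem 6.1 with the complex tail bound of Theorem 4.6 substituted for Theorem 4.5, which is precisely what you carry out. In fact your remainder $\frac{1}{4\pi(n-m+1)}$ correctly exploits the doubled exponent $2(n-m+1)$ and gives $\log 6.298 + \frac{1}{4\pi} \approx 1.92$, which is sharper than the stated constant $2.240$ (the latter evidently just reuses the real-case remainder $0.399$), so your version implies the theorem with room to spare.
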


\section*{Acknowledgments}
We would like to thank Julien Langou for precious discussions.
We are very grateful to referees for their helpful corrections,
valuable comments, and constructive suggestions which have improved
the presentation of this paper.




\end{document}